\providecommand{\U}[1]{\protect\rule{.1in}{.1in}}
\newtheorem{theorem}{Theorem}[section]
\newtheorem{corollary}[theorem]{Corollary}
\newtheorem{lemma}[theorem]{Lemma}
\newtheorem{proposition}[theorem]{Proposition}
\newtheorem{definition}[theorem]{Definition}
\newtheorem{remark}{Remark}
\newtheorem{assumption}{Assumption}
\numberwithin{equation}{section}
\newcommand{\dist}{\mathrm{dist}}
\newcommand{\conv}{\mathrm{conv}}
\newcommand{\tr}{\mathrm{tr}}
\newcommand{\proj}{\mathrm{Proj}}
\newcommand{\bR}{\mathbb{R}}
\newcommand{\bN}{\mathbb{N}}
\newcommand{\cC}{\mathcal{C}}
\newcommand{\cM}{\mathcal{M}}
\newcommand{\cN}{\mathcal{N}}
\newcommand{\cO}{\mathcal{O}}
\newcommand{\cR}{\mathcal{R}}
\newcommand{\cT}{\mathcal{T}}
\newcommand{\tG}{\mathtt{G}}
\newcommand{\tV}{\mathtt{V}}
\newcommand{\tE}{\mathtt{E}}
\newcommand{\Rnp}{\mathbb{R}^{n \times p}} 
\newcommand{\Rpp}{\mathbb{R}^{p \times p}} 
\newcommand{\Rnm}{\mathbb{R}^{n \times m}}
\newcommand{\barX}{\bar{X}}
\newcommand{\hatX}{{\hat{X}}}
\newcommand{\bfone}{\mathbf{1}}
\newcommand{\bfzero}{\mathbf{0}}
\newcommand{\bfW}{\mathbf{W}}
\newcommand{\bfJ}{\mathbf{J}}
\newcommand{\bfD}{\mathbf{D}}
\newcommand{\bfX}{\mathbf{X}}
\newcommand{\bfS}{\mathbf{S}}
\newcommand{\Xik}{X_{i}^{(k)}}
\newcommand{\Xjk}{X_{j}^{(k)}}
\newcommand{\Xikn}{X_{i}^{(k+1)}}
\newcommand{\Dik}{D_{i}^{(k)}}
\newcommand{\Djk}{D_{j}^{(k)}}
\newcommand{\Dikn}{D_{i}^{(k+1)}}
\newcommand{\Sik}{S_{i}^{(k)}}
\newcommand{\Eik}{E_{i}^{(k)}}
\newcommand{\bbXk}{\bar{\mathbf{X}}^{(k)}}
\newcommand{\bbXkn}{\bar{\mathbf{X}}^{(k+1)}}
\newcommand{\avXk}{\hat{\mathbf{X}}^{(k)}}
\newcommand{\avDk}{\hat{\mathbf{D}}^{(k)}}
\newcommand{\avDkn}{\hat{\mathbf{D}}^{(k+1)}}
\newcommand{\avGk}{\hat{\mathbf{G}}^{(k)}}
\newcommand{\bfXk}{\mathbf{X}^{(k)}}
\newcommand{\bfXkn}{\mathbf{X}^{(k+1)}}
\newcommand{\bfDk}{\mathbf{D}^{(k)}}
\newcommand{\bfDkn}{\mathbf{D}^{(k+1)}}
\newcommand{\bfGk}{\mathbf{G}^{(k)}}
\newcommand{\bfGkn}{\mathbf{G}^{(k+1)}}
\newcommand{\bfSk}{\mathbf{S}^{(k)}}
\newcommand{\barXk}{\bar{X}^{(k)}}
\newcommand{\barXkn}{\bar{X}^{(k + 1)}}
\newcommand{\hatXk}{\hat{X}^{(k)}}
\newcommand{\hatXkn}{\hat{X}^{(k + 1)}}
\newcommand{\hatDk}{\hat{D}^{(k)}}
\newcommand{\hatDkn}{\hat{D}^{(k + 1)}}
\newcommand{\hatSk}{\hat{S}^{(k)}}
\newcommand{\hatGk}{\hat{G}^{(k)}}
\newcommand{\zz}{^{\top}}
\newcommand{\ff}{_{\mathrm{F}}}
\newcommand{\fs}{^2_{\mathrm{F}}}
\newcommand{\Snp}{\mathcal{S}_{n,p}}
\newcommand{\dkh}[1]{\left(#1\right)}
\newcommand{\hkh}[1]{\left\{#1\right\}}
\newcommand{\jkh}[1]{\left\langle#1\right\rangle}
\newcommand{\norm}[1]{\left\|#1\right\|}
\newcommand{\abs}[1]{\left\lvert #1\right\rvert}
\newcommand{\prox}{\mathrm{Prox}}
\newcommand{\iid}{i \in [d]}
\newcommand{\sumiid}{\sum_{i=1}^d}
\newcommand{\sumjjd}{\sum\limits_{j=1}^d}
\definecolor{Gray}{rgb}{0.5,0.5,0.5}
\DeclareMathOperator*{\argmin}{arg\,min}
\newcommand{\Rmnum}[1]{\expandafter\@slowromancap\romannumeral #1@}
\begin{document}


\title{A Decentralized Proximal Gradient Tracking Algorithm  
for Composite Optimization on Riemannian Manifolds}

\author{Lei Wang\thanks{Department of Statistics, Pennsylvania State University, University Park, PA, USA (\href{mailto:lzw5569@psu.edu}{lzw5569@psu.edu}).} 
\and Le Bao\thanks{Department of Statistics, Pennsylvania State University, University Park, PA, USA (\href{mailto:lub14@psu.edu}{lub14@psu.edu}).}
\and Xin Liu\thanks{State Key Laboratory of Scientific and Engineering Computing, Academy of Mathematics and Systems Science, Chinese Academy of Sciences, and University of Chinese Academy of Sciences, Beijing, China (\href{mailto:liuxin@lsec.cc.ac.cn}{liuxin@lsec.cc.ac.cn}).}}

\date{} 

\maketitle

\begin{abstract}

This paper focuses on minimizing a smooth function combined with a nonsmooth regularization term on a compact Riemannian submanifold embedded in the Euclidean space under a decentralized setting.
Typically, there are two types of approaches at present for tackling such composite optimization problems.
The first, subgradient-based approaches, rely on subgradient information of the objective function to update variables, achieving an iteration complexity of $\cO(\epsilon^{-4}\log^2(\epsilon^{-2}))$.
The second, smoothing approaches, involve constructing a smooth approximation of the nonsmooth regularization term, resulting in an iteration complexity of $\cO(\epsilon^{-4})$.
This paper proposes a proximal gradient type algorithm that fully exploits the composite structure. 
The global convergence to a stationary point is established with a significantly improved iteration complexity of $\cO(\epsilon^{-2})$.
To validate the effectiveness and efficiency of our proposed method, we present numerical results in real-world applications, showcasing its superior performance.

\end{abstract}


\section{Introduction}
	
Given a set of $d \in \bN$ agents connected by a communication network, our focus is on the composite optimization problems on the Riemannian manifold of the following form:
\begin{equation} \label{opt:stiefel}
	\min_{X \in \cM} \hspace{2mm} 
	\dfrac{1}{d} \sumiid f_i(X) + r (X),
\end{equation}
where $f_i: \Rnp \to \bR$ is a smooth local function privately owned by agent $i$, $r: \Rnp \to \bR$ is a convex function known to all the agents, and $\cM$ is a compact Riemannian submanifold embedded in $\Rnp$ \cite{Absil2008}.
Employing a regularizer allows us to use prior knowledge about the problem structure explicitly.
Throughout this paper, we make the following assumptions on the objective functions.

\begin{assumption}\label{asp:objective}
	The following statements hold in the problem \eqref{opt:stiefel}.
	
	\begin{enumerate}
		
		
		\item $f_i$ is smooth and its gradient $\nabla f_i$ is Lipschitz continuous over the convex hull of $\cM$, denoted by $\conv(\cM)$, with the corresponding Lipschitz constant $L_f > 0$.
		
		\item $r$ is convex and Lipschitz continuous with the corresponding Lipschitz constant $L_r > 0$. 
		Moreover, the proximal mapping $\prox_{\lambda r} (X)$ of $r$, 
		which is defined by
		\begin{equation*}
			\prox_{\lambda r} (X) := \argmin_{Y \in \Rnp}\; r(Y) + \dfrac{1}{2\lambda} \norm{Y - X}\fs,
		\end{equation*}
		is easy-to-compute for any $\lambda > 0$ and $X \in \Rnp$.
		
	\end{enumerate}
\end{assumption}

Problems of the form \eqref{opt:stiefel} find significant applications across various scientific and engineering fields, 
such as sparse principal component analysis (PCA) \cite{Jolliffe2003,Wang2023communication}, 
subspace learning \cite{Mishra2019riemannian},
matrix completion \cite{Boumal2015},
orthogonal dictionary learning \cite{Zhu2019linearly,Li2021weakly},
deep neural networks with batch normalization \cite{Cho2017riemannian}.
However, under the decentralized setting, designing efficient algorithms for these problems becomes particularly challenging. 
The complexity arises primarily from the combination of nonsmooth nature of objective functions and nonconvexity of manifold constraints.

\subsection{Network Setting}

We consider a scenario in which the agents can only exchange information with their immediate neighbors.
The network $\tG = (\tV, \tE)$ captures the communication links diffusing information among the agents.
Here, $\tV = [d] := \{1, 2, \dotsc, d\}$ is composed of all the agents and $\tE = \{(i, j) \mid i \text{~and~} j \text{~are connected}\}$ represents the set of communication links.
Throughout this paper, we make the following assumptions on the network.

\begin{assumption} \label{asp:network}
	The communication network $\tG = (\tV, \tE)$ is connected.
	Furthermore, there exists a mixing matrix $W = [W(i, j)] \in \bR^{d \times d}$ associated with $\tG$ satisfying the following conditions.
	\begin{enumerate}
		
		\item $W$ is symmetric and nonnegative.
		
		\item $W \mathbf{1}_d = W\zz \mathbf{1}_d = \mathbf{1}_d$.
		
		\item $W(i, j) = 0$ if $i \neq j$ and $(i, j) \notin \tE$.
		
	\end{enumerate}
\end{assumption}

The assumptions about the mixing matrix are standard in the literature \cite{Nedic2018network}, under which $W$ is primitive and doubly stochastic and conforms to the underlying network topology.
Then invoking the Perron-Frobenius Theorem \cite{Pillai2005perron}, we know that the eigenvalues of $W$ lie in $(-1, 1]$ and
\begin{equation} \label{eq:sigma}
	\sigma := \norm{W - \bfone_d \bfone_d\zz / d}_2 < 1.
\end{equation}
The parameter $\sigma$ characterizes the connectivity of the network $\tG$ and plays a prominent part in the analysis of decentralized methods.
In fact, $\sigma = 0$ indicates the full connectivity of $\tG$, while $\sigma$ approaches $1$ as the connectivity of $\tG$ worsens.

Finally, it is noteworthy that the mixing matrix $W$ in Assumption \ref{asp:network}, which always exists, can be efficiently constructed via the exchange of local degree information of $\tG$ on a neighbor-to-neighbor basis.
Interested readers can refer to \cite{Xiao2004,Gharesifard2012distributed,Shi2015,Nedic2018network} for more details.
	
\subsection{Literature Survey}

Recent years have witnessed the repaid development of decentralized optimization in the Euclidean space, marked by the emergence of various algorithms for different types of problems, 
such as decentralized gradient descent algorithms \cite{Nedic2009,Yuan2016,Zeng2018},
gradient tracking methods \cite{Xu2015,Qu2017,Nedic2017,Sun2022distributed,Song2023optimal}, 
primal-dual frameworks \cite{Shi2015,Ling2015,Chang2015multi,Hajinezhad2019}, 
proximal gradient tracking approaches \cite{Di2016,Scutari2019,Xin2021stochastic,Yan2023compressed}, 
decentralized Newton methods \cite{Bajovic2017newton,Zhang2021newton,Daneshmand2021newton}, 
and so on.
Interested readers can refer to some recent surveys \cite{Nedic2018network,Xin2020general,Chang2020distributed} and references therein for a complete review of the decentralized algorithms in the Euclidean space.

The investigation of decentralized algorithms for optimization problems on Riemannian manifolds is still in its infancy. 
Existing studies, predominantly centered on the particular case of Stiefel manifolds, can be categorized into two main groups, which will be briefly introduced below.

The first group of algorithms extends classical decentralized methods in the Euclidean space by leveraging the geometric tools derived from Riemannian optimization \cite{Absil2008}.
For instance, Chen et al. \cite{Chen2021decentralized} propose a decentralized Riemannian stochastic gradient descent (DRSGD) method along with its gradient-tracking variant DRGTA.
Subsequently, these two methods are generalized to compact Riemannian submanifolds in \cite{Deng2023decentralized}, resulting in algorithms named DPRGD and DPRGT, respectively.
Moreover, the algorithm DRCGD, developed in \cite{Chen2023decentralized}, incorporates the Riemannian conjugate gradient method into the framework of \cite{Deng2023decentralized} to enhance convergence rates.
Additionally, Deng et al. \cite{Deng2023douglas} propose a decentralized algorithm DDRS method together with its inexact version iDDRS based on the Douglas--Rachford splitting method.
The aforementioned approaches are tailored for smooth objective functions and, therefore, may not be directly applicable to the problem \eqref{opt:stiefel}.
Recently, Wang et al. \cite{Wang2023decentralized} have introduced a decentralized Riemannian subgradient method (DRSM) aimed at tackling nonsmooth optimization problems on Riemannian manifolds. This method operates under the assumption that each local function is weakly convex.
However, it is worth noting that DRSM suffers from a slow convergence rate, attributed to its reliance solely on subgradient information.

The second group of algorithms utilizes penalty functions to handle nonconvex manifold constraints, including DESTINY \cite{Wang2022decentralized}, VRSGT \cite{Wang2023variance}, and THANOS \cite{Wang2023smoothing}.
Specifically, DESTINY incorporates a gradient tracking scheme into the minimization of an approximate augment Lagrangian function.
VRSGT goes a step further by integrating the variance reduction technique to simultaneously reduce the communication and sampling complexities.
To deal with nonsmooth regularizers, THANOS constructs an approximation of the objective function based on the Moreau envelope.
It is crucial to emphasize that these algorithms heavily rely on the specific structure of the Stiefel manifold, posing challenges for their extension to more general Riemannian manifolds.

\subsection{Our Contributions}

In response to the growing demand for processing large-scale datasets in practical applications, we propose a novel decentralized algorithm called DR-ProxGT for solving the problem \eqref{opt:stiefel}.
Our approach involves solving a proximal gradient subproblem on the tangent space, where we leverage the gradient tracking technique to estimate the Euclidean gradient across the whole network.
The convergence analysis reveals that DR-ProxGT converges globally to a stationary point of \eqref{opt:stiefel} and exhibits an iteration complexity of $\cO(\epsilon^{-2})$.
To the best of our knowledge, this achieves the best result in the literature.
Please refer to Table~\ref{tb:complexity} for a comparative overview of existing complexity results.
In particular, we establish a novel descent inequality for the nonconvex composite objective function and derive a refined uniform bound of gradient trackers by meticulously handling the proximal mapping. 
These technical results are of independent interest and have potential applications beyond the scope of this specific problem.
Finally, we conduct comprehensive numerical experiments to compare DR-ProxGT with two competing algorithms. 
The test results are strongly in favor of our algorithm in practical applications.

\begin{table}[ht]
\caption{A summary of the iteration complexity for existing algorithms to find an $\epsilon$-stationary point of the problem \eqref{opt:stiefel}.}
\label{tb:complexity}
\begin{tabular*}{\textwidth}{@{\extracolsep{\fill}}cccc@{\extracolsep{\fill}}}
	\toprule%
	Algorithm & Manifold & Stepsize & Iteration Complexity~~ \\
	\midrule
	DRSM \cite{Wang2023decentralized} 
	& Stiefel manifold
	& diminishing stepsize
	& $\cO(\epsilon^{-4}\log^2(\epsilon^{-2}))$ \\
	THANOS \cite{Wang2023smoothing} 
	& Stiefel manifold
	& fixed stepsize
	& $\cO(\epsilon^{-4})$ \\
	~~DR-ProxGT (this work) 
	& compact manifold
	& fixed stepsize
	& $\cO(\epsilon^{-2})$ \\
	\bottomrule
\end{tabular*}
\end{table}

\subsection{Notations}

The following notations are adopted throughout this paper.
The Euclidean inner product of two matrices \(Y_1, Y_2\) with the same size is defined as \(\jkh{Y_1, Y_2}=\tr(Y_1\zz Y_2)\), where $\tr (B)$ stands for the trace of a square matrix $B$.
And the notation $I_p \in \Rpp$ represents the $p \times p$ identity matrix.
The Frobenius norm and 2-norm of a given matrix \(C\) are denoted by \(\norm{C}\ff\) and \(\norm{C}_2\), respectively. 
The $(i, j)$-th entry of a matrix $C$ is represented by $C (i, j)$.
The notation $\bfone_d \in \bR^d$ and $\bfzero_d \in \bR^d$ stand for the $d$-dimensional vector of all ones and all zeros, respectively.
We define the distance and the projection of a point $X \in \Rnp$ onto a set $\cC \subset \Rnp$ by $\dist(X, \cC) := \inf\{\norm{Y - X}\ff \mid Y \in \cC\}$ and $\proj_{\cC}(X) := \argmin_{Y \in \cC}\norm{Y - X}\ff$, respectively.
The Kronecker product is denoted by $\otimes$.
Given a differentiable function \(g(X) : \Rnp \to \bR\), the Euclidean gradient of \(g\) with respect to \(X\) is represented by \(\nabla g(X)\).
Further notations will be introduced wherever they occur.

\subsection{Outline}

The remainder of this paper is organized as follows.
Section \ref{sec:preliminaries} draws into some preliminaries of Riemannian optimization.
In Section \ref{sec:algorithm}, we devise a proximal gradient type algorithm for solving the problem \eqref{opt:stiefel}.  
The convergence properties of the proposed algorithm are investigated in Section \ref{sec:convergence}.
Numerical results are presented in Section \ref{sec:experiments} to evaluate the performance of our algorithm.
Finally, this paper concludes with concluding remarks and key insights in Section \ref{sec:conclusions}.

\section{Preliminaries} \label{sec:preliminaries}

This section introduces and reviews some basic notions and concepts regarding Riemannian manifolds that are closely related to the present work in this paper.

\subsection{Stationarity Condition}

In this subsection, we delve into the stationarity condition of the problem \eqref{opt:stiefel}.
Towards this end, we first introduce the definition of Clarke subgradient \cite{Clarke1990} for nonsmooth functions.

\begin{definition}
	Suppose $f: \Rnp \to \bR$ is a Lipschitz continuous function.
	The generalized directional derivative of $f$ at the point $X \in \Rnp$
	along the direction $H \in  \Rnp$ is defined by:
	\begin{equation*}
		f^{\circ} (X; H) := \limsup\limits_{Y \to X,\, t \to 0^+} \dfrac{f (Y + t H) - f(Y)}{t}.
	\end{equation*}
	Based on generalized directional derivative of $f$,
	the (Clarke) subgradient of $f$ is defined by:
	\begin{equation*}
		\partial f(X) := \{G \in \Rnp \mid \jkh{G, H} \leq f^{\circ} (X; H) \}.
	\end{equation*}
\end{definition}

Additionally, we employ geometric concepts of Riemannian manifolds to present the stationarity condition.
For each point $X \in \cM$, the tangent space to $\cM$ at $X$ is referred to as $\cT_{X}$, and the notation $\cN_{X}$ represents the normal space at $X$. 
In this paper, we consider the Riemannian metric $\jkh{\cdot, \cdot}_X$ on $\cT_{X}$ that is induced from the Euclidean inner product $\jkh{\cdot, \cdot}$, i.e., $\jkh{V_1, V_2}_X = \jkh{V_1, V_2} = \tr(V_1\zz V_2)$.
Roughly speaking, the tangent space $\cT_{X}$ intuitively contains the possible directions in which one can tangentially pass through $X$, while the normal space $\cN_{X}$ is the orthogonal complement of $\cT_{X}$ in $\Rnp$.
For precise statements on these notions, interested readers can refer to the monograph \cite{Absil2008}.

For convenience, we denote $f(X) := \sumiid f_i(X) / d$ and $h(X) := f(X) + r(X)$.
Now we are prepared to state the stationarity condition of the problem \eqref{opt:stiefel}, which has been thoroughly discussed in \cite{Yang2014,Chen2020}.

\begin{definition}
	A point $X \in \cM$ is called a stationary point of the problem \eqref{opt:stiefel} if it satisfies the following stationarity condition,
	\begin{equation*}
		0 \in \proj_{\cT_{X}}\dkh{\partial h(X)} = \proj_{\cT_{X}}\dkh{\nabla f(X) + \partial r(X)},
	\end{equation*}
	or equivalently, there exists an element $R(X) \in \partial r(X)$ such that $\nabla f(X) + R(X) \in \cN_{X}$.
\end{definition}

\subsection{Proximal Smoothness}

Our theoretical analysis heavily relies on the concept of proximal smoothness, which tides us over the obstacle incurred by the nonconvexity of manifolds.
Following \cite{Clarke1995proximal}, we say a closed set $\cC$ is $\delta$-proximally smooth for a constant $\delta > 0$ if the projection $\proj_{\cC}(X)$ is a singleton whenever $\dist(X, \cC) < \delta$.
It should be noted that the operation operator $\proj_{\cC}(X)$ of a $\delta$-proximally smooth set $\cC$ is Lipschitz continuous as long as $X$ is not far away from $\cC$.
Specifically, for any $\gamma \in (0, \delta)$, the following relationship holds whenever $\dist(X, \cC) < \gamma$ and $\dist(Y, \cC) < \gamma$,
\begin{equation*}
	\norm{\proj_{\cC}(X) - \proj_{\cC}(Y)}\ff \leq \dfrac{\delta}{\delta - \gamma} \norm{X - Y}\ff.
\end{equation*}

As is shown in \cite{Clarke1995proximal,Davis2020stochastic,Balashov2021gradient}, any compact $C^2$-manifold $\cM$ embedded in the Euclidean space is proximally smooth.
For instance, the Stiefel manifold is $1$-proximally smooth and the Grassmann manifold is $1 / \sqrt{2}$-proximally smooth.
For ease of notation, we assume that the manifold $\cM$ is $2 \gamma$-proximally smooth for a constant $\gamma > 0$.
Then for any $X, Y \in \cR(\gamma) := \{X \in \Rnp \mid \dist(X, \cM) < \gamma\}$, we have
\begin{equation} \label{eq:proj-Lip}
	\norm{\proj_{\cM}(X) - \proj_{\cM}(Y)}\ff \leq 2 \norm{X - Y}\ff.
\end{equation}

Below is another crucial inequality regarding the projection operator $\proj_{\cM}$ indicating that $X +  \proj_{\cT_{X}}(V)$ is a second-order approximation of $\proj_{\cM}(X + V)$ for any $X \in \cM$ and $V \in \Rnp$.
Specifically, it holds that
\begin{equation} \label{eq:proj-Lip-s}
	\norm{\proj_{\cM}(X + V) - X -  \proj_{\cT_{X}}(V)}\ff \leq M_{pj} \norm{V}\fs,
\end{equation}
where $M_{pj} > 0$ is a constant.
We refer interested readers to \cite{Deng2023decentralized} for the construction of \eqref{eq:proj-Lip-s}.

\subsection{Consensus on Riemannian Manifolds}

In decentralized networks, where only local communications are permitted, each agent $i$ has to maintain its local copy $X_i \in \cM$ of the common variable $X$ in the problem \eqref{opt:stiefel}. 
Let $\hatX$ be the Euclidean average of $\{X_i\}_{i = 1}^d$ defined by
\begin{equation*}
	\hatX := \dfrac{1}{d} \sumiid X_i 
	= \argmin_{Y \in \Rnp} \sumiid \norm{Y - X_i}\fs.
\end{equation*}
To access the consensus error, the quantity $\sumiid \|X_i - \hatX\|\fs$ is typically used in the convergence analysis of Euclidean decentralized algorithms.

It should be noted that the average $\hatX$ may not necessarily lie in the manifold $\cM$ even if $X_i \in \cM$ for $\iid$, owing to the nonconvexity of $\cM$. 
This observation motivates the definition of induced arithmetic mean introduced in \cite{Sarlette2009consensus} as follows,
\begin{equation*}
	\barX \in \argmin_{Y \in \cM} \sumiid \norm{Y - X_i}\fs
	= \proj_{\cM}(\hatX).
\end{equation*}
According to the stationarity condition of the above problem, we have $\barX - \hatX \in \cN_{\barX}$.
In the development of our algorithm, we will guarantee that $\hatX \in \cR(\gamma)$.
The proximal smoothness of $\cM$ then results in that $\proj_{\cM}(\hatX)$ is a singleton.

Moreover, as is discussed in \cite{Deng2023decentralized}, the distance between $\hatX$ and $\barX$ can be controlled by the consensus error measured by $\barX$ under a certain condition.
Specifically, for any $\{X_i \in \cM\}_{i = 1}^d$ satisfying $\max_{\iid}\|X_i - \barX\|\ff \leq \gamma$, there exists a constant $M_{av} > 0$ such that
\begin{equation} \label{eq:dist-aver}
	\norm{\hatX - \barX}\ff \leq \dfrac{M_{av}}{d} \sumiid \norm{X_i - \barX}\fs,
\end{equation}
which will be used in the subsequent analysis.

\section{Decentralized Riemannian Proximal Gradient Tracking} \label{sec:algorithm}

In this section, we devise an efficient decentralized algorithm to solve the problem \eqref{opt:stiefel} by exploiting the composite structure.
Each iteration of our algorithm attempts to tackle a convex subproblem coupled with a linear constraint.
The objection function of this subproblem is constructed by linearizing the function $f$ around the current iterate.
Simultaneously, the linear constraint in question captures the structure of the tangent space.
A key feature of our algorithm is the integration of the gradient tracking technique, which is employed to estimate $\nabla f$ across the entire network. 
With the help of gradient tracking, our algorithm can effectively achieve exact consensus.

We introduce two auxiliary local variables, $D_i \in \Rnp$ and $S_i \in \Rnp$, for agent $i$ in our algorithm.
Specifically, $D_i$ is designed to track the global gradient $\nabla f$ through the exchange of local gradient information, while $S_i$ aims at estimating the search direction on the tangent space based on $D_i$.

Hereafter, we use the notations $\Xik$, $\Dik$, and $\Sik$ to represent the $k$-th iterate of $X_i$, $D_i$, and $S_i$, respectively.
The key steps of our algorithm from the perspective of each agent are outlined below.

\paragraph{Step 1: $S$-update.}
Given the composite structure, a natural approach to tackle the problem \eqref{opt:stiefel} involves evaluating the following proximal gradient step constrained to the tangent space, as proposed by \cite{Chen2020},
\begin{equation*}
	\min_{S \in \cT_{X}} \hspace{2mm} \jkh{\nabla f (X), S} + \dfrac{1}{2 \tau} \norm{S}\fs + r (X + S), 
\end{equation*}
where $\tau > 0$ is a constant.
The intuition behind this method is to seek a descent direction on the tangent space by replacing the smooth term $f$ with its first-order approximation around the current estimate.
Under the decentralized setting, however, the global gradient $\nabla f$ is not available.
To overcome this challenge, we introduce the auxiliary variable $D_i \in \Rnp$ at agent $i$ to estimate $\nabla f$ across the whole network.
Specifically, at iteration $k$, each agent $i$ aims to solve the following subproblem to obtain the search direction,
\begin{equation} \label{eq:update-S}
	\Sik := \argmin_{S_i \in \cT_{\Xik}} \hspace{2mm}
	\jkh{\Dik, S_i} + \dfrac{1}{2 \tau} \norm{S_i}\fs + r (\Xik + S_i),
\end{equation}
where $\tau > 0$ remains a constant.
The subproblem in \eqref{eq:update-S} involves minimizing a strongly convex function subject to linear constraints.
In the specific scenario where $\cM = \Snp := \{X \in \Rnp \mid X\zz X = I_p\}$, Chen et al. \cite{Chen2020} and Xiao et al. \cite{Xiao2021penalty} have utilized the semi-smooth Newton method and the Uzawa method, respectively, to address a subproblem with a comparable structure. 
Notably, these methodologies can be seamlessly extended to encompass the case of compact manifolds embedded in the Euclidean space.

\paragraph{Step 2: $X$-update.}
Once the search direction is determined, our algorithm executes a local update along the direction of $\Sik$ for agent $i$ at iteration $k$, combining the consensus step with the projection onto $\cM$,
\begin{equation} \label{eq:update-X}
	\Xikn := \proj_{\cM} \dkh{\sumjjd W^t(i, j) \Xjk + \eta \Sik},
\end{equation}
where $\eta > 0$ is a stepsize, and $t \geq 1$ is an integer.
To guarantee the convergence, we incorporate multiple consensus steps to update local variables. 
This requirement is a common practice for achieving consensus on Riemannian manifolds \cite{Chen2021decentralized,Wang2023decentralized,Deng2023decentralized,Chen2023decentralized,Deng2023douglas}.

\paragraph{Step 3: $D$-update.}
Finally, leveraging the updated information, each agent $i$ computes the new estimate of $\nabla f$ by resorting to the gradient tracking technique \cite{Zhu2010discrete,Daneshmand2020},
\begin{equation} \label{eq:update-D}
	\Dikn := \sumjjd W^t(i, j) \Djk + \nabla f_i (\Xikn) - \nabla f_i (\Xik).
\end{equation}
Diverging from existing approaches \cite{Chen2021decentralized, Deng2023decentralized}, our algorithm directly tracks the Euclidean gradient instead of the Riemannian gradient \cite{Absil2008}.
This strategy serves to alleviate the computational burden associated with projecting the Euclidean gradient onto the tangent space.

The whole procedure is summarized in Algorithm \ref{alg:DR-ProxGT},
named {\it Decentralized Riemannian proximal gradient tracking}
and abbreviated to DR-ProxGT.
Our approach merges the Riemannian proximal gradient method with the gradient tracking technique, resulting in a substantial enhancement in iteration complexity. The details will be elaborated upon in the subsequent section.


\begin{algorithm}[ht!]
	\caption{Decentralized Riemannian proximal gradient tracking (DR-ProxGT).} 
	\label{alg:DR-ProxGT}
	
	\KwIn{$X_{\mathrm{initial}} \in \cM$, $t \geq 1$, $\tau > 0$, and $\eta > 0$.}
	
	Set $k := 0$.
	
	\For{$\iid$}{
	
		Initialize $\Xik := X_{\mathrm{initial}}$
		and $\Dik := \nabla f_i (\Xik)$.
	
	}

	\While{``not converged''}
	{
	
		\For{$\iid$}{
			
			Compute $\Sik$ by \eqref{eq:update-S}.
			
			Update $\Xikn$ by \eqref{eq:update-X}.
			
			Update $\Dikn$ by \eqref{eq:update-D}.
		
		}
		
		Set $k := k + 1$.
		
	}
	
	\KwOut{$\{\Xik\}_{i = 1}^d$.}
	
\end{algorithm}

\section{Convergence Analysis} \label{sec:convergence}

The global convergence of our proposed Algorithm \ref{alg:DR-ProxGT} is rigorously established under mild conditions in this section.
To facilitate the narrative, we define the following notations.
\begin{itemize}
	
	\item $J = \bfone_d \bfone_d\zz / d$,
	$\bfJ = J \otimes I_n$,
	$\bfW^t = W^t \otimes I_n$.
	
	\item $\hatXk = \dfrac{1}{d} \sum\limits_{i = 1}^d \Xik$, 
	$\barXk \in \proj_{\cM}(\hatXk)$,
	$\hatDk = \dfrac{1}{d} \sum\limits_{i = 1}^d \Dik$,
	$\hatGk = \dfrac{1}{d} \sum\limits_{i = 1}^d \nabla f_i(\Xik)$.
	
	\item $\bfXk = [(X_1^{(k)})\zz, \dotsc, (X_d^{(k)})\zz]\zz$,
	$\avXk = (\bfone_d \otimes I_n) \hatXk = \bfJ \bfXk$,
	$\bbXk = (\bfone_d \otimes I_n) \barXk$.
	
	\item $\bfDk = [(D_1^{(k)})\zz, \dotsc, (D_d^{(k)})]\zz$,
	$\avDk = (\bfone_d \otimes I_n) \hatDk = \bfJ \bfDk$.
	
	\item $\bfGk = [(\nabla f_1(X_1^{(k)}))\zz, \dotsc, (\nabla f_d(X_d^{(k)}))]\zz$,
	$\avGk = (\bfone_d \otimes I_n) \hatGk = \bfJ \bfGk$.
	
	\item $\bfSk = [(S_1^{(k)})\zz, \dotsc, (S_d^{(k)})]\zz$,
	$\hatSk = \dfrac{1}{d} \sum\limits_{i = 1}^d \Sik$.
	
\end{itemize}

\subsection{Boundedness of Iterates}

The purpose of this subsection is to show the boundedness of the iterate sequence $\{(\bfXk, \bfDk, \bfSk)\}$ generated by Algorithm \ref{alg:DR-ProxGT}.
It is obvious that the sequence $\{\bfXk\}$ is bounded since $\cM$ is a compact manifold.
Next, we prove that the sequence $\{\bfDk\}$ is bounded.

\begin{lemma} \label{le:bound-D}
	Suppose that Assumption \ref{asp:objective} and Assumption \ref{asp:network} hold.
	Let $\{(\bfXk, \bfDk, \bfSk)\}$ be the iterate sequence generated by Algorithm \ref{alg:DR-ProxGT} with $t > \lceil \log_{\sigma}(1/2) \rceil$.
	Then for any $k \in \bN$, it holds that 
	\begin{equation*}
		\norm{\bfDk}\ff \leq 6 \sqrt{d} M_g,
	\end{equation*}
	where $M_g := \sup \{\norm{\nabla f_i (X)}\ff \mid X \in \cM, \iid\} > 0$ is a constant.
\end{lemma}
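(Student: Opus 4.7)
The plan is to split $\bfDk$ into its network-mean component $\bfJ\bfDk$ and the deviation component $\bfDk - \bfJ\bfDk$, bound each separately, then add the two bounds. The threshold $t>\lceil \log_\sigma(1/2)\rceil$ is chosen precisely so that $\sigma^t<1/2$, which will drive a contraction in the deviation recursion.

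First, I would establish the standard gradient-tracking invariant that the network average of $\bfDk$ equals the network average of the Euclidean gradients, i.e.\ $\hatDk=\hatGk$ for every $k\in\bN$. This follows by induction: at $k=0$ we initialize $\bfD^{(0)}=\bfG^{(0)}$, and multiplying the update \eqref{eq:update-D} by $\bfJ/d$ on the left and using $\bfJ\bfW^t=\bfJ$ (since $W^t\bfone_d=\bfone_d$) yields $\hatDkn=\hatDk+(\hatGkn-\hatGk)$, which preserves the invariant. Since $\bfJ\bfDk=\bfone_d\otimes \hatDk$, this gives
\begin{equation*}
\norm{\bfJ\bfDk}\ff = \sqrt{d}\,\norm{\hatGk}\ff \leq \sqrt{d}\cdot\frac{1}{d}\sumiid \norm{\nabla f_i(\Xik)}\ff \leq \sqrt{d}\,M_g.
\end{equation*}

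Next, I would derive a one-step recursion for the deviation $\bfDk-\bfJ\bfDk$. Subtracting $\bfJ\bfDkn$ from both sides of the vectorized update $\bfDkn=\bfW^t\bfDk+\bfGkn-\bfGk$ and using $\bfJ\bfW^t=\bfJ$ and $(\bfW^t-\bfJ)\bfJ=0$ gives
\begin{equation*}
\bfDkn-\bfJ\bfDkn = (\bfW^t-\bfJ)(\bfDk-\bfJ\bfDk) + (I-\bfJ)(\bfGkn-\bfGk).
\end{equation*}
Taking Frobenius norms, using $\norm{\bfW^t-\bfJ}_2=\sigma^t<1/2$ (from \eqref{eq:sigma} and the choice of $t$), and bounding $\norm{(I-\bfJ)(\bfGkn-\bfGk)}\ff\leq \norm{\bfGkn}\ff+\norm{\bfGk}\ff\leq 2\sqrt{d}\,M_g$, I obtain
\begin{equation*}
\norm{\bfDkn-\bfJ\bfDkn}\ff \leq \tfrac{1}{2}\norm{\bfDk-\bfJ\bfDk}\ff + 2\sqrt{d}\,M_g.
\end{equation*}

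A simple induction on this affine contraction — seeded by $\norm{\bfD^{(0)}-\bfJ\bfD^{(0)}}\ff\leq\norm{\bfG^{(0)}}\ff\leq\sqrt{d}\,M_g\leq 4\sqrt{d}\,M_g$ — yields $\norm{\bfDk-\bfJ\bfDk}\ff\leq 4\sqrt{d}\,M_g$ for every $k$. Combining with the first bound via the triangle inequality gives $\norm{\bfDk}\ff\leq 5\sqrt{d}\,M_g\leq 6\sqrt{d}\,M_g$, which is the claim with room to spare. The main obstacle here is conceptual rather than computational: one must notice that the initialization $\bfD^{(0)}=\bfG^{(0)}$ guarantees the average-preservation invariant, and that $t$ must be large enough for a strict contraction — without this, the deviation term could not absorb the gradient-difference perturbation uniformly in $k$.
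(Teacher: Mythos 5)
Your proposal is correct and takes essentially the same route as the paper: both decompose $\bfDk$ into the average part (bounded by $\sqrt{d}M_g$ via the gradient-tracking invariant $\hatDk=\hatGk$) and the deviation part, which satisfies the affine contraction $\norm{\bfDkn-\avDkn}\ff\leq\sigma^t\norm{\bfDk-\avDk}\ff+2\sqrt{d}M_g$ with $\sigma^t<1/2$. The only cosmetic difference is that you close the induction with a fixed uniform bound $4\sqrt{d}M_g$, whereas the paper unrolls the recursion into a geometric sum before invoking $\sigma^t<1/2$; both yield $5\sqrt{d}M_g\leq 6\sqrt{d}M_g$.
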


\begin{remark}
	Since $\nabla f_i$ is Lipschitz continuous and $\cM$ is compact, the constant $M_g$ is well-defined.
\end{remark}

\begin{proof}
	According to the triangular inequality, we have
	\begin{equation*}
		\norm{\bfGkn - \bfGk}\ff 
		\leq \norm{\bfGkn}\ff + \norm{\bfGk}\ff 
		\leq 2 \sqrt{d} M_g,
	\end{equation*}
	where the last inequality follows from the definition of $M_g$.
	And straightforward calculations gives rise to the following relationship,
	\begin{equation*}
	\begin{aligned}
		\norm{\bfDkn - \avDkn}\ff
		 = {} & \norm{ \dkh{\bfW^t - \bfJ} \dkh{\bfDk - \avDk} 
		 + \dkh{I_{dn} - \bfJ} \dkh{\bfGkn - \bfGk} }\ff \\
		 \leq {} & \norm{\dkh{\bfW^t - \bfJ} \dkh{\bfDk - \avDk}}\ff
		 + \norm{\bfGkn - \bfGk}\ff \\
		 \leq {} & \sigma^t \norm{\bfDk - \avDk}\ff + 2 \sqrt{d} M_g.
	\end{aligned}
	\end{equation*}
	Then, by mathematical induction, we can obtain that
	\begin{equation*}
		\norm{\bfDkn - \avDkn}\ff 
		\leq (\sigma^t)^{k + 1} \norm{\mathbf{D}^{(0)} - \hat{\mathbf{D}}^{(0)}}\ff 
		+ \dfrac{2 \sqrt{d} M_g (1 - (\sigma^t)^{k + 1})}{1 - \sigma^t}.
	\end{equation*}
	which together with $\mathbf{D}^{(0)} = \mathbf{G}^{(0)}$ and $\hat{\mathbf{D}}^{(0)} = \hat{\mathbf{G}}^{(0)}$ yields that
	\begin{equation*}
		\norm{\bfDkn - \avDkn}\ff 
		\leq (\sigma^t)^{k + 1} \norm{\mathbf{G}^{(0)} - \hat{\mathbf{G}}^{(0)}}\ff 
		+ \dfrac{2 \sqrt{d} M_g (1 - (\sigma^t)^{k + 1})}{1 - \sigma^t}
		\leq \dfrac{2 \sqrt{d} M_g (1 - (\sigma^t)^{k + 2})}{1 - \sigma^t}.
	\end{equation*}
	In addition, it can be straightforwardly verified that
	\begin{equation} \label{eq:bound-hatD}
		\norm{\hatDk}\ff 
		= \norm{\dfrac{1}{d} \sumiid \nabla f_i(\Xik)}\ff
		\leq \dfrac{1}{d} \sumiid \norm{\nabla f_i(\Xik)}\ff
		\leq M_g.
	\end{equation}
	Therefore, we can obtain that
	\begin{equation*}
	\begin{aligned}
		\norm{\bfDkn}\ff 
		& \leq \norm{\bfDkn - \avDkn}\ff + \norm{\avDkn}\ff
		= \norm{\bfDkn - \avDkn}\ff + \sqrt{d} \norm{\hatDkn}\ff \\
		& \leq \dfrac{2 \sqrt{d} M_g (1 - (\sigma^t)^{k + 2})}{1 - \sigma^t}
		+ \sqrt{d}M_g
		\leq 6 \sqrt{d} M_g,
	\end{aligned}
	\end{equation*}
	where the last inequality follows from the fact that $\sigma^t \in (0, 1/2)$.
	The proof is completed.
\end{proof}

Then we prove that the norm of $\Sik$ is controlled by that of $\Dik$ for any $\iid$ and $k \in \bN$ in the following lemma.

\begin{lemma} \label{le:bound-Si}
	Suppose that Assumption \ref{asp:objective} and Assumption \ref{asp:network} hold.
	Let $\{(\bfXk, \bfDk, \bfSk)\}$ be the iterate sequence generated by Algorithm \ref{alg:DR-ProxGT}.
	Then, for any $\iid$ and $k \in \bN$, it holds that
	\begin{equation*}
		\norm{\Sik}\ff 
		\leq \tau \norm{\Dik}\ff 
		+ \tau L_r.
	\end{equation*}
\end{lemma}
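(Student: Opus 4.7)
The plan is to exploit the first-order optimality condition of the strongly convex subproblem \eqref{eq:update-S} that defines $\Sik$. Since the feasible set $\cT_{\Xik}$ is a linear subspace (the tangent space), the optimality condition says that the subdifferential of the objective, after projecting onto $\cT_{\Xik}$, contains zero. Concretely, there must exist a subgradient $g_i \in \partial r(\Xik + \Sik)$ such that
\begin{equation*}
    \proj_{\cT_{\Xik}}\!\dkh{\Dik + \tfrac{1}{\tau}\Sik + g_i} = 0.
\end{equation*}

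Using the fact that $\Sik \in \cT_{\Xik}$ (so that $\proj_{\cT_{\Xik}}(\Sik) = \Sik$) and the linearity of projection, I would rearrange this identity into
\begin{equation*}
    \Sik = -\tau \proj_{\cT_{\Xik}}\!\dkh{\Dik + g_i}.
\end{equation*}
Since the tangent-space projection is $1$-Lipschitz (it is an orthogonal projector onto a linear subspace), taking Frobenius norms and applying the triangle inequality gives $\norm{\Sik}\ff \le \tau \norm{\Dik}\ff + \tau \norm{g_i}\ff$. The claim then follows immediately from the standard fact that every element of the Clarke subdifferential of an $L_r$-Lipschitz convex function has norm at most $L_r$, so $\norm{g_i}\ff \le L_r$.

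The main obstacle, such as it is, is just writing down the KKT-type optimality condition carefully for a nonsmooth strongly convex problem constrained to a linear subspace; once that is stated correctly, the rest is a two-line estimate. An alternative route would be a direct comparison $F(\Sik) \le F(0)$ of the subproblem objective, combined with the Lipschitz bound $r(\Xik) - r(\Xik + \Sik) \le L_r \norm{\Sik}\ff$ and Cauchy--Schwarz; however, that approach loses a factor of $2$ and yields only $\norm{\Sik}\ff \le 2\tau(\norm{\Dik}\ff + L_r)$, so the optimality-based argument above is the one that delivers the stated constant.
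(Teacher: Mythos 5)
Your argument is correct, but it takes a genuinely different route from the paper. You work directly with the stationarity identity: picking $g_i \in \partial r(\Xik + \Sik)$ with $\proj_{\cT_{\Xik}}\dkh{\Dik + \tfrac{1}{\tau}\Sik + g_i} = 0$, solving for $\Sik$ using $\proj_{\cT_{\Xik}}(\Sik) = \Sik$, and invoking nonexpansiveness of the orthogonal projector together with the bound $\norm{g_i}\ff \leq L_r$ on subgradients of an $L_r$-Lipschitz convex function. The paper instead runs the function-value comparison you dismiss at the end --- but crucially the \emph{strengthened} version of it: it applies the strong-convexity inequality (modulus $1/\tau$) to the pair $S = \Sik$, $S' = 0$, which yields $g_i^{(k)}(0) - g_i^{(k)}(\Sik) \geq \tfrac{1}{2\tau}\norm{\Sik}\fs$ and hence $\tfrac{1}{\tau}\norm{\Sik}\fs \leq L_r\norm{\Sik}\ff + \norm{\Dik}\ff\norm{\Sik}\ff$ after adding back the quadratic term in $g_i^{(k)}(\Sik)$; dividing by $\norm{\Sik}\ff$ recovers exactly the stated constant, with no factor of $2$ lost. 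So your closing remark is accurate only for the naive comparison $g_i^{(k)}(\Sik) \leq g_i^{(k)}(0)$; the paper's descent-based route and your KKT-based route both deliver $\tau(\norm{\Dik}\ff + L_r)$. Your version is arguably cleaner and is consistent with the optimality condition the paper itself writes down later in the proof of Theorem \ref{thm:rate}; the paper's version has the minor advantage of only needing the value of the objective at $S'=0$ rather than an explicit subgradient selection and the subdifferential sum rule (which is valid here since $r$ is finite-valued and hence continuous, so there is no gap either way).
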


\begin{proof}
	To begin with, the assertion of this lemma is obvious if $\Sik = 0$.
	Next, we investigate the case that $\Sik \neq 0$.
	For convenience, we denote the objective function in \eqref{eq:update-S} by
	\begin{equation*}
		g_i^{(k)} (S) := \jkh{\Dik, S} + \dfrac{1}{2 \tau} \norm{S}\fs + r (\Xik + S).
	\end{equation*}
	Since $g_i^{(k)}$ is strongly convex with modulus $1 / \tau$, we have
	\begin{equation} \label{eq:gik}
		g_i^{(k)} (S^{\prime}) 
		\geq g_i^{(k)} (S) 
		+ \jkh{\partial g_i^{(k)} (S), S^{\prime} - S} 
		+ \dfrac{1}{2 \tau} \norm{S^{\prime} - S}\fs,
	\end{equation}
	for any $S, S^{\prime} \in \Rnp$.
	In particular, if $S, S^{\prime} \in \cT_{\Xik}$, it holds that
	\begin{equation*}
		\jkh{\partial g_i^{(k)} (S), S^{\prime} - S}
		= \jkh{\proj_{\cT_{\Xik}} \dkh{\partial g_i^{(k)} (S)}, S^{\prime} - S}.
	\end{equation*}
	Then it follows from the first-order optimality condition of \eqref{eq:update-S} that
	\begin{equation*}
		0 \in \proj_{\cT_{\Xik}} \dkh{\partial g_i^{(k)} (\Sik)}.
	\end{equation*}
	Upon taking $S = \Sik$ and $S^{\prime} = 0$ in \eqref{eq:gik}, we can obtain that
	\begin{equation*}
		g_i^{(k)} (0) - g_i^{(k)} (\Sik)
		\geq \dfrac{1}{2 \tau}\norm{\Sik}\fs,
	\end{equation*}
	which together with the Lipschitz continuity of $r$ infers that
	\begin{equation*}
		\begin{aligned}
			\dfrac{1}{\tau}\norm{\Sik}\fs
			\leq r (\Xik)  - r (\Xik + \Sik)
			- \jkh{\Dik, \Sik}
			\leq L_r \norm{\Sik}\ff
			+ \norm{\Dik}\ff \norm{\Sik}\ff.
		\end{aligned}
	\end{equation*}
	Hence, we can arrive at the conclusion that
	\begin{equation*}
		\norm{\Sik}\ff 
		\leq \tau \norm{\Dik}\ff 
		+ \tau L_r,
	\end{equation*}
	as desired.
\end{proof}

Now the boundedness of the sequence $\{\bfSk\}$ can be straightforwardly obtained by combining the above two lemmas.

\begin{corollary} \label{cor:bound-S}
	Let $\{(\bfXk, \bfDk, \bfSk)\}$ be the iterate sequence generated by Algorithm \ref{alg:DR-ProxGT} with $t > \lceil \log_{\sigma}(1/2) \rceil$.
	Suppose that Assumption \ref{asp:objective} and Assumption \ref{asp:network} hold.
	Then for any $k \in \bN$, we have
	\begin{equation*}
		\norm{\bfSk}\fs 
		\leq 2 d \tau^2 \dkh{36 M_g^2 + L_r^2}.
	\end{equation*}
\end{corollary}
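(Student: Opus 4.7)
The plan is to derive the corollary by directly combining the pointwise bound on $\|S_i^{(k)}\|\ff$ from Lemma~\ref{le:bound-Si} with the aggregate bound on $\|\bfDk\|\ff$ from Lemma~\ref{le:bound-D}. No additional structural arguments appear to be needed; the only subtlety is converting a sum of squares of sums into a clean quadratic in the two governing constants.

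First, I would square the inequality $\|\Sik\|\ff \leq \tau \|\Dik\|\ff + \tau L_r$ and apply the elementary bound $(a+b)^2 \leq 2a^2 + 2b^2$ to obtain
\begin{equation*}
    \norm{\Sik}\fs \leq 2\tau^2 \norm{\Dik}\fs + 2\tau^2 L_r^2
\end{equation*}
for each $\iid$ and each $k \in \bN$. Next I would sum this inequality over $\iid$, which gives
\begin{equation*}
    \norm{\bfSk}\fs = \sumiid \norm{\Sik}\fs \leq 2\tau^2 \norm{\bfDk}\fs + 2 d \tau^2 L_r^2,
\end{equation*}
using the definition $\bfSk = [(S_1^{(k)})\zz,\dotsc,(S_d^{(k)})\zz]\zz$.

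Finally I would invoke Lemma~\ref{le:bound-D}, whose hypothesis $t > \lceil \log_{\sigma}(1/2) \rceil$ is exactly the one assumed in the corollary, to conclude $\|\bfDk\|\fs \leq 36 d M_g^2$, and substitute this into the previous display to obtain the claimed bound $\|\bfSk\|\fs \leq 2 d \tau^2 (36 M_g^2 + L_r^2)$. There is no genuine obstacle here: the proof is a short chain of substitutions, and the only choice is whether to split the quadratic via $(a+b)^2 \leq 2(a^2+b^2)$ (which yields exactly the stated constants) or via a looser AM--GM inequality. I would use the former to match the constants in the statement.
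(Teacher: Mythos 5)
Your proof is correct and follows exactly the route the paper intends: the paper states the corollary is a direct consequence of Lemma~\ref{le:bound-D} and Lemma~\ref{le:bound-Si} and omits the details, and your chain (square the pointwise bound via $(a+b)^2 \le 2a^2+2b^2$, sum over $i$, substitute $\norm{\bfDk}\fs \le 36\,d\,M_g^2$) supplies precisely those details with the stated constants.
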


\begin{proof}
	This is a direct consequence of Lemma \ref{le:bound-D} and Lemma \ref{le:bound-Si},
	and the proof is omitted here.
\end{proof}

\subsection{Consensus and Tracking Errors}

This subsection is devoted to building the upper bound of consensus error $\|\bfXk - \bbXk\|\fs$ and tracking error $\|\bfDk - \avDk\|\fs$.
Towards this end, we first show that, based on the following lemma, $\proj_{\cM}(\hatXk)$ is a singleton and $\barXk = \proj_{\cM}(\hatXk)$ for any $k \in \cN$.

\begin{lemma} \label{le:cN}
	Suppose that Assumption \ref{asp:objective} and Assumption \ref{asp:network} hold.
	Let $\{(\bfXk, \bfDk, \bfSk)\}$ be the iterate sequence generated by Algorithm \ref{alg:DR-ProxGT} with
	\begin{equation} \label{eq:cond-N}
		0 < \eta \leq 1,
		0 < \tau \leq \dfrac{\gamma}{24 (6\sqrt{d}M_g + L_r)},
		\mbox{~and~}
		t >  \max\hkh{\left\lceil \log_{\sigma} \dkh{\dfrac{1}{2}} \right\rceil, \left\lceil \log_{\sigma} \dkh{\dfrac{\gamma}{24 \sqrt{d} M_l}} \right\rceil},
	\end{equation}
	where $M_l := \sup\{\norm{X - Y}\ff \mid X, Y \in \cM\} > 0$ is a constant.
	Then the relationship $\max_{\iid}\|\Xik - \barXk\|\ff \leq \gamma / 2$ holds for any $k \in \bN$.
\end{lemma}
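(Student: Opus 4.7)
I would argue by induction on $k$. The base case $k = 0$ is immediate: since every agent initializes at the common point $X_{\mathrm{initial}} \in \cM$, one has $X_i^{(0)} = \hat{X}^{(0)} = \bar{X}^{(0)}$ for all $\iid$, so $\max_{\iid}\|X_i^{(0)} - \bar{X}^{(0)}\|\ff = 0 \leq \gamma/2$.

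For the inductive step, assume the bound at step $k$ and set $Y_i^{(k)} := \sum_{j = 1}^{d} W^t(i, j) X_j^{(k)} + \eta S_i^{(k)}$, so that $X_i^{(k+1)} = \proj_{\cM}(Y_i^{(k)})$. The plan is to control $Y_i^{(k)}$ relative to $\hat{X}^{(k)}$ and $\bar{X}^{(k)}$ and then push the resulting estimates through the projection. By Lemma~\ref{le:bound-D} and Lemma~\ref{le:bound-Si} one has $\|S_i^{(k)}\|\ff \leq \tau(6\sqrt{d}M_g + L_r)$, so the hypotheses $\eta \leq 1$ and $\tau \leq \gamma/(24(6\sqrt{d}M_g + L_r))$ deliver $\eta\|S_i^{(k)}\|\ff \leq \gamma/24$. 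For the consensus piece, row-stochasticity of $W^t$ lets me rewrite $\sum_{j} W^t(i,j) X_j^{(k)} - \hat{X}^{(k)} = \sum_{j} (W^t(i,j) - 1/d)(X_j^{(k)} - \bar{X}^{(k)})$, and then Cauchy--Schwarz together with the uniform bound $\|X_j^{(k)} - \bar{X}^{(k)}\|\ff \leq M_l$ and the condition $\sigma^t \leq \gamma/(24\sqrt{d}M_l)$ yields $\|\sum_{j} W^t(i,j)X_j^{(k)} - \hat{X}^{(k)}\|\ff \leq \gamma/24$. Combining gives $\|Y_i^{(k)} - \hat{X}^{(k)}\|\ff \leq \gamma/12$.

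Using \eqref{eq:dist-aver} under the inductive hypothesis, $\|\hat{X}^{(k)} - \bar{X}^{(k)}\|\ff$ is of quadratic order in $\gamma$, so $Y_i^{(k)}$ lies safely inside $\cR(\gamma)$ and $X_i^{(k+1)} = \proj_{\cM}(Y_i^{(k)})$ is a singleton. The Lipschitz inequality \eqref{eq:proj-Lip} then controls $\|X_i^{(k+1)} - \bar{X}^{(k)}\|\ff$, and averaging over $i$ shows that $\hat{X}^{(k+1)}$ is also within $\gamma$ of $\cM$, so $\bar{X}^{(k+1)} = \proj_{\cM}(\hat{X}^{(k+1)})$ is a singleton as well. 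Another application of \eqref{eq:proj-Lip} bounds $\|\bar{X}^{(k+1)} - \bar{X}^{(k)}\|\ff$, after which a triangle inequality $\|X_i^{(k+1)} - \bar{X}^{(k+1)}\|\ff \leq \|X_i^{(k+1)} - \bar{X}^{(k)}\|\ff + \|\bar{X}^{(k)} - \bar{X}^{(k+1)}\|\ff$ closes the induction.

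The main obstacle is arithmetic rather than structural: naively chaining two factor-$2$ projection Lipschitz estimates inflates the $\gamma/12$ consensus bound past $\gamma/2$. I would overcome this by invoking the second-order approximation \eqref{eq:proj-Lip-s} to linearize each projection about $\bar{X}^{(k)}$. Writing $X_i^{(k+1)} = \bar{X}^{(k)} + \proj_{\cT_{\bar{X}^{(k)}}}(Y_i^{(k)} - \bar{X}^{(k)}) + E_i$ with $\|E_i\|\ff \leq M_{pj}\|Y_i^{(k)} - \bar{X}^{(k)}\|\fs$, and similarly expanding $\bar{X}^{(k+1)}$, the difference $X_i^{(k+1)} - \bar{X}^{(k+1)}$ collapses (to leading order) into the tangent-space consensus term $\proj_{\cT_{\bar{X}^{(k)}}}(Y_i^{(k)} - \bar{Y}^{(k)})$, which exhibits genuine cancellation under averaging and is bounded by $\|Y_i^{(k)} - \bar{Y}^{(k)}\|\ff$; a direct estimate using the pieces above gives at most $\gamma/8$. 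The quadratic remainders are of size $O(M_{pj}\gamma^2)$ plus an $O(M_{av}\gamma^2)$ contribution from \eqref{eq:dist-aver}, and the factor of $24$ built into the hypotheses on $\tau$ and $\sigma^t$ provides precisely the slack needed to absorb these remainders below the target $\gamma/2$, completing the induction.
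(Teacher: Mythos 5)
Your induction setup, base case, and the entire first half of the argument (the bounds $\eta\|\Sik\|\ff \le \gamma/24$, $\|\sum_j W^t(i,j)\Xjk - \hatXk\|\ff \le \sqrt{d}\sigma^t M_l \le \gamma/24$, hence $\|Y_i^{(k)} - \hatXk\|\ff \le \gamma/12$ and $Y_i^{(k)} \in \cR(\gamma)$) coincide with the paper's proof. One small remark there: to place $Y_i^{(k)}$ in $\cR(\gamma)$ you should use the linear bound $\|\hatXk - \barXk\|\ff \le \max_{\iid}\|\Xik - \barXk\|\ff \le \gamma/2$ coming directly from the inductive hypothesis and convexity of the norm, not \eqref{eq:dist-aver}; the quadratic bound $M_{av}\gamma^2/4$ is not necessarily smaller than $\gamma/2$, since nothing in the lemma relates $\gamma$ to $M_{av}$.

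That same issue becomes a genuine gap in your closing step. You correctly diagnose that chaining two factor-$2$ projection estimates overshoots $\gamma/2$, but your proposed fix --- linearizing both projections via \eqref{eq:proj-Lip-s} and absorbing the quadratic remainders --- does not go through under the stated hypotheses. The remainders you must absorb are of size $M_{pj}\|Y_i^{(k)} - \barXk\|\fs = O(M_{pj}\gamma^2)$ and $O(M_{av}\gamma^2)$, and condition \eqref{eq:cond-N} constrains only $\tau$, $\eta$, and $t$; it provides no control of $M_{pj}\gamma$ or $M_{av}\gamma$, so ``the factor of $24$'' cannot absorb these terms. Your argument would only prove the lemma after shrinking $\gamma$ (hence tightening \eqref{eq:cond-N}), which changes the statement. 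The paper closes the induction without any second-order expansion, using only one application of \eqref{eq:proj-Lip}: it writes
\begin{equation*}
\norm{\Xikn - \barXkn}\ff \le \norm{\Xikn - \barXk}\ff + \norm{\barXkn - \hatXkn}\ff + \norm{\hatXkn - \barXk}\ff \le 3\max_{\iid}\norm{\Xikn - \barXk}\ff,
\end{equation*}
where the middle term is bounded by $\norm{\hatXkn - \barXk}\ff$ because $\barXkn$ minimizes the distance from $\hatXkn$ to $\cM$ and $\barXk \in \cM$, and $\norm{\hatXkn - \barXk}\ff \le \max_{\iid}\norm{\Xikn - \barXk}\ff$ since $\hatXkn$ is the average of the $\Xikn$. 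Then $\norm{\Xikn - \barXk}\ff = \|\proj_{\cM}(Y_i^{(k)}) - \proj_{\cM}(\hatXk)\|\ff \le 2\cdot\gamma/12 = \gamma/6$ by \eqref{eq:proj-Lip}, giving exactly $3\cdot\gamma/6 = \gamma/2$. Replacing your second-order cancellation argument with this observation repairs the proof.
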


\begin{proof}
	We will use mathematical induction to prove this lemma.
	The argument $\max_{\iid}\|X_i^{(0)} - \barX^{(0)}\|\ff \leq \gamma / 2$ directly holds resulting from the initialization. 
	Now, we assume that $\max_{\iid}\|\Xik - \barXk\|\ff \leq \gamma / 2$, and investigate the situation of $\max_{\iid}\|\Xikn - \barXkn\|\ff$.
	
	Our first purpose is to show that
	\begin{equation} \label{eq:cR}
		\sumjjd W^t(i, j) \Xjk + \eta \Sik \in \cR(\gamma), 
	\end{equation}
	for any $\iid$.
	In fact, it can be readily verified that
	\begin{equation*}
	\begin{aligned}
		\norm{\bfXk - \avXk}\fs = \sumiid \norm{\Xik - \hatXk}\fs
		\leq \dfrac{1}{d} \sumiid \sumjjd \norm{\Xik - \Xjk}\fs
		\leq d M_l^2,
	\end{aligned}
	\end{equation*}
	which is followed by
	\begin{equation*}
	\begin{aligned}
		\norm{\sumjjd W^t(i, j) \Xjk - \hatXk}\fs
		\leq {} & \sumiid \norm{\sumjjd W^t(i, j) \Xjk - \hatXk}\fs \\
		= {} & \norm{\bfW^t \bfXk - \avXk}\fs \\
		= {} & \norm{\dkh{\bfW^t - \bfJ}\dkh{\bfXk - \avXk}}\fs \\
		\leq {} & \sigma^{2t} \norm{\bfXk - \avXk}\fs
		\leq d \sigma^{2t} M_l^2.
	\end{aligned}
	\end{equation*}
	Thus, we can obtain that
	\begin{equation*}
	\begin{aligned}
		\norm{\sumjjd W^t(i, j) \Xjk + \eta \Sik - \hatXk}\ff
		\leq {} & \norm{\sumjjd W^t(i, j) \Xjk - \hatXk}\ff
		+ \eta \norm{\Sik}\ff \\
		\leq {} & \sqrt{d} \sigma^t M_l
		+ \eta \tau (6 \sqrt{d} M_g + L_r)
		\leq \dfrac{\gamma}{12}.
	\end{aligned}
	\end{equation*}
	It follows from $\max_{\iid}\|\Xik - \barXk\|\ff \leq \gamma / 2$ that $\|\hatXk - \barXk\|\ff \leq \gamma / 2$, which further yields that
	\begin{equation*}
		\norm{\sumjjd W^t(i, j) \Xjk + \eta \Sik - \barXk}\ff
		\leq \norm{\sumjjd W^t(i, j) \Xjk + \eta \Sik - \hatXk}\ff
		+ \norm{\hatXk - \barXk}\ff
		< \gamma.
	\end{equation*}
	Since $\barXk \in \cM$, we know that the relationship \eqref{eq:cR} holds.
	
	Next, we proceed to prove that $\max_{\iid}\|\Xikn - \barXkn\|\ff \leq \gamma / 2$.
	For any $\iid$, we have
	\begin{equation*}
	\begin{aligned}
		\norm{\Xikn - \barXkn}\ff
		\leq {} & \norm{\Xikn - \barXk}\ff + \norm{\barXk - \barXkn}\ff \\
		\leq {} & \norm{\Xikn - \barXk}\ff + \norm{\barXkn - \hatXkn}\ff + \norm{\hatXkn - \barXk}\ff \\
		\leq {} &  \norm{\Xikn - \barXk}\ff + 2 \norm{\hatXkn - \barXk}\ff \\
		\leq {} & 3 \max_{\iid} \norm{\Xikn - \barXk}\ff,
	\end{aligned}
	\end{equation*}
	where the third inequality results from the definition of $\barXkn$.
	Moreover, according to the relationship \eqref{eq:proj-Lip-s}, it follows that
	\begin{equation*}
		\begin{aligned}
			\norm{\Xikn - \barXk}\ff
			= {} & \norm{\proj_{\cM} \dkh{\sumjjd W^t(i, j) \Xjk + \eta \Sik} - \proj_{\cM} \dkh{\hatXk}}\ff \\
			\leq {} & 2 \norm{\sumjjd W^t(i, j) \Xjk + \eta \Sik - \hatXk}\ff 
			\leq \dfrac{\gamma}{6}.
		\end{aligned}
	\end{equation*}
	Hence, we can conclude that $\max_{\iid}\|\Xikn - \barXkn\|\ff \leq \gamma / 2$.
	The proof is completed.
\end{proof}

According to the proof of Lemma \ref{le:cN}, it follows that the relationship \eqref{eq:cR} holds for any $k \in \bN$ in Algorithm \ref{alg:DR-ProxGT} under the condition \eqref{eq:cond-N}.
Moreover, the relationship $\max_{\iid}\|\Xik - \barXk\|\ff \leq \gamma / 2$ directly implies that the inclusion $\hatXk \in \cR(\gamma)$ is valid.
As a consequence, $\proj_{\cM}(\hatXk)$ is a singleton and $\barXk = \proj_{\cM}(\hatXk)$.
Next, we establish the upper bound of consensus errors.

\begin{lemma} \label{le:error-cons}
	Suppose that Assumption \ref{asp:objective} and Assumption \ref{asp:network} hold.
	Let $\{(\bfXk, \bfDk, \bfSk)\}$ be the iterate sequence generated by Algorithm \ref{alg:DR-ProxGT} with the algorithmic parameters satisfying the condition \eqref{eq:cond-N}.
	Then for any $k \in \bN$, it holds that
	\begin{equation*}
		\norm{\bfXkn - \bbXkn}\fs
		\leq 8 \sigma^{2t} \norm{\bfXk - \bbXk}\fs
		+ 8 \eta^2 \norm{\bfSk}\fs.
	\end{equation*}
\end{lemma}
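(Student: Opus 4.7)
The plan is to chain three standard-but-carefully-ordered bounds: (i) replace $\bbXkn$ by a more convenient constant using the variational definition of $\barXkn$, (ii) push $\Xikn$ inside the projection via the Lipschitz bound \eqref{eq:proj-Lip}, and (iii) exploit the contraction of $\bfW^t-\bfJ$ on the mean-zero subspace after centering by $\avXk$ (not $\bbXk$).

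First, since $\barXkn = \proj_{\cM}(\hatXkn) = \argmin_{Y\in\cM}\sum_i \|\Xikn - Y\|\fs$, the choice $Y=\barXk\in\cM$ yields
\begin{equation*}
\norm{\bfXkn - \bbXkn}\fs = \sumiid \norm{\Xikn - \barXkn}\fs \leq \sumiid \norm{\Xikn - \barXk}\fs.
\end{equation*}
Next, set $T_i := \sumjjd W^t(i,j)\Xjk + \eta\Sik$, so $\Xikn = \proj_{\cM}(T_i)$. Lemma~\ref{le:cN} (and the relation \eqref{eq:cR} established in its proof) tells us that both $T_i\in\cR(\gamma)$ and $\hatXk\in\cR(\gamma)$, and $\barXk = \proj_{\cM}(\hatXk)$ since $\hatXk\in\cR(\gamma)$. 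Therefore \eqref{eq:proj-Lip} applies and gives
\begin{equation*}
\norm{\Xikn - \barXk}\ff = \norm{\proj_{\cM}(T_i) - \proj_{\cM}(\hatXk)}\ff \leq 2\,\norm{T_i - \hatXk}\ff .
\end{equation*}

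The crux is step (iii). Because $W^t$ is doubly stochastic, $\sumjjd W^t(i,j)\hatXk = \hatXk$, hence
\begin{equation*}
T_i - \hatXk = \big(\bfW^t\bfXk - \avXk\big)_i + \eta\,\Sik,
\end{equation*}
and summing the squared Frobenius norms over $i$ and using $(a+b)^2\le 2a^2+2b^2$,
\begin{equation*}
\sumiid\norm{T_i - \hatXk}\fs \leq 2\,\norm{\bfW^t\bfXk - \avXk}\fs + 2\eta^2\norm{\bfSk}\fs .
\end{equation*}
Now $\avXk = \bfJ\bfXk$ is invariant under $\bfW^t$, so $\bfW^t\bfXk - \avXk = (\bfW^t-\bfJ)(\bfXk - \avXk)$, and by \eqref{eq:sigma} together with the spectral bound on the mean-zero subspace,
\begin{equation*}
\norm{\bfW^t\bfXk - \avXk}\fs \leq \sigma^{2t}\norm{\bfXk - \avXk}\fs .
\end{equation*}
Finally, $\avXk = \bfJ\bfXk$ is the Frobenius-best approximation of $\bfXk$ by a vector in the constant subspace $\{\bfone_d\otimes Y : Y\in\Rnp\}$, and $\bbXk$ lies in that subspace, so $\norm{\bfXk - \avXk}\fs \leq \norm{\bfXk - \bbXk}\fs$. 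Combining all four inequalities produces the desired
\begin{equation*}
\norm{\bfXkn - \bbXkn}\fs \leq 4\sumiid\norm{T_i - \hatXk}\fs \leq 8\sigma^{2t}\norm{\bfXk - \bbXk}\fs + 8\eta^2\norm{\bfSk}\fs .
\end{equation*}

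The main pitfall, and the reason the argument has to be arranged in exactly this order, is that $\bfW^t$ does \emph{not} contract with rate $\sigma^t$ when centered at $\bbXk$: a direct bound on $\norm{\bfW^t(\bfXk-\bbXk)}\ff$ picks up the uncontracted component $\bfone_d\otimes(\hatXk-\barXk)$, which Lemma~\ref{le:cN} only controls quadratically via \eqref{eq:dist-aver}. Routing through $\hatXk$ (rather than $\barXk$) inside the projection Lipschitz estimate converts the problem into a contraction around $\avXk$, which \emph{is} invariant under $\bfW^t$, and only at the very end do we trade $\avXk$ for $\bbXk$ using the Euclidean-mean optimality — a trade that is free of charge because $\bbXk$ lies in the constant subspace.
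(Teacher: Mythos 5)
Your proof is correct and follows essentially the same route as the paper's: bound $\|\Xikn-\barXkn\|\ff$ by $\|\Xikn-\barXk\|\ff$ via the variational definition of $\barXkn$, apply the projection Lipschitz bound \eqref{eq:proj-Lip} around $\hatXk$, split off $\eta\Sik$, and contract with $\bfW^t-\bfJ$. The only (cosmetic) difference is at the end: the paper notes $(\bfW^t-\bfJ)\bbXk=0$ and bounds $\|(\bfW^t-\bfJ)(\bfXk-\bbXk)\|\fs\le\sigma^{2t}\|\bfXk-\bbXk\|\fs$ directly — so centering at $\bbXk$ is in fact harmless, contrary to your closing remark — whereas you pass through $\avXk$ and then use $\|\bfXk-\avXk\|\fs\le\|\bfXk-\bbXk\|\fs$; both yield the same constants.
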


\begin{proof}
	To begin with, we have
	\begin{equation*}
		\norm{\bfW^t \bfXk - \avXk}\fs
		= \norm{\dkh{\bfW^t - \bfJ} \dkh{\bfXk - \bbXk}}\fs
		\leq \sigma^{2t} \norm{\bfXk - \bbXk}\fs.
	\end{equation*}
	Then by virtue of the relationship \eqref{eq:cR} and the proximal smoothness of $\cM$, we can attain that
	\begin{equation} \label{eq:barXk}
	\begin{aligned}
		\sumiid \norm{\Xikn - \barXk}\fs
		\leq {} & \sumiid \norm{\proj_{\cM} \dkh{\sumjjd W^t(i, j) \Xjk + \eta \Sik} - \proj_{\cM} \dkh{\hatXk}}\fs \\
		\leq {} & 4 \sumiid \norm{\sumjjd W^t(i, j) \Xjk + \eta \Sik - \hatXk}\fs \\
		\leq {} & 8 \norm{\bfW^t \bfXk - \avXk}\fs 
		+ 8 \eta^2 \norm{\bfSk}\fs \\
		\leq {} & 8 \sigma^{2t} \norm{\bfXk - \bbXk}\fs
		+ 8 \eta^2 \norm{\bfSk}\fs.
	\end{aligned}
	\end{equation}
	Finally, from the definition of $\barXkn$, it follows that
	\begin{equation*}
		\norm{\bfXkn - \bbXkn}\fs
		= \sumiid \norm{\Xikn - \barXkn}\fs
		\leq \sumiid \norm{\Xikn - \barXk}\fs.
	\end{equation*}
	Combining the above two relationships, we complete the proof.
\end{proof}

Eventually, we conclude this subsection by bounding the tracking errors.

\begin{lemma} \label{le:error-trac}
	Suppose that Assumption \ref{asp:objective} and Assumption \ref{asp:network} hold.
	Let $\{(\bfXk, \bfDk, \bfSk)\}$ be the iterate sequence generated by Algorithm \ref{alg:DR-ProxGT} with the algorithmic parameters satisfying the condition \eqref{eq:cond-N}.
	Then for any $k \in \bN$, it holds that
	\begin{equation*}
		\norm{\bfDkn - \avDkn}\fs
		\leq 2 \sigma^{2t} \norm{\bfDk - \avDk}\fs
		+ 12 L_f^2 \norm{\bfXk - \bbXk}\fs
		+ 32 L_f^2 \eta^2 \norm{\bfSk}\fs.
	\end{equation*}
\end{lemma}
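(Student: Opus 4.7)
The plan is to start from the compact matrix form of the tracking update, peel off the consensus part using the spectral gap of $W^t$, and then bound the gradient increment $\norm{\bfGkn - \bfGk}\fs$ by $\norm{\bfXkn - \bfXk}\fs$, which in turn I will bound using the estimate already established in Lemma~\ref{le:error-cons}.

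First, I rewrite \eqref{eq:update-D} in stacked form as $\bfDkn = \bfW^t \bfDk + (\bfGkn - \bfGk)$. Applying $\bfJ$ on the left and using $\bfJ \bfW^t = \bfJ$ (since $W^t \bfone_d = \bfone_d$) gives $\avDkn = \avDk + \bfJ(\bfGkn - \bfGk)$, so
\begin{equation*}
\bfDkn - \avDkn = (\bfW^t - \bfJ)(\bfDk - \avDk) + (I - \bfJ)(\bfGkn - \bfGk),
\end{equation*}
where I used $(\bfW^t - \bfJ)\avDk = 0$ because $\avDk$ lies in the agreement subspace. Since $\norm{\bfW^t - \bfJ}_2 = \norm{W^t - J}_2 = \sigma^t$ (see \eqref{eq:sigma} together with $(W-J)^t = W^t - J$) and $I - \bfJ$ is an orthogonal projector, the triangle inequality yields $\norm{\bfDkn - \avDkn}\ff \leq \sigma^t \norm{\bfDk - \avDk}\ff + \norm{\bfGkn - \bfGk}\ff$. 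Squaring and invoking $(a+b)^2 \leq 2a^2 + 2b^2$ then gives
\begin{equation*}
\norm{\bfDkn - \avDkn}\fs \leq 2 \sigma^{2t} \norm{\bfDk - \avDk}\fs + 2 \norm{\bfGkn - \bfGk}\fs.
\end{equation*}

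Next, the Lipschitz continuity of each $\nabla f_i$ from Assumption~\ref{asp:objective} gives $\norm{\bfGkn - \bfGk}\fs = \sumiid \norm{\nabla f_i(\Xikn) - \nabla f_i(\Xik)}\fs \leq L_f^2 \norm{\bfXkn - \bfXk}\fs$, so the task reduces to bounding $\norm{\bfXkn - \bfXk}\fs$ by $6 \norm{\bfXk - \bbXk}\fs + 16 \eta^2 \norm{\bfSk}\fs$. I would do this by inserting $\barXk$ and applying $(a+b)^2 \leq 2a^2 + 2b^2$ coordinate-wise: $\norm{\Xikn - \Xik}\fs \leq 2\norm{\Xikn - \barXk}\fs + 2\norm{\barXk - \Xik}\fs$, and summing over $i$ gives $\norm{\bfXkn - \bfXk}\fs \leq 2 \sumiid \norm{\Xikn - \barXk}\fs + 2 \norm{\bbXk - \bfXk}\fs$.

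The key observation is that the first term on the right has already been controlled inside the proof of Lemma~\ref{le:error-cons}: the intermediate inequality \eqref{eq:barXk} reads $\sumiid \norm{\Xikn - \barXk}\fs \leq 8 \sigma^{2t} \norm{\bfXk - \bbXk}\fs + 8 \eta^2 \norm{\bfSk}\fs$. Substituting this in and using the standing stepsize assumption $\sigma^t \leq 1/2$ (which comes from the condition \eqref{eq:cond-N} on $t$, as $t > \lceil \log_\sigma(1/2)\rceil$ implies $\sigma^{2t} \leq 1/4$) bounds the bracketed coefficient by $16\sigma^{2t} + 2 \leq 6$, yielding $\norm{\bfXkn - \bfXk}\fs \leq 6 \norm{\bfXk - \bbXk}\fs + 16 \eta^2 \norm{\bfSk}\fs$. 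Plugging this back through the $L_f^2$ bound and the $2\sigma^{2t}$ contraction inequality above produces exactly the target estimate.

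The main obstacle, such as it is, lies in getting the precise constants to work out; the natural but loose route of directly bounding $\norm{(\bfW^t - I)\bfXk}\ff$ via $\norm{\bfW^t - I}_2 \leq 2$ gives constants that are too large, so it is essential to route the estimate through $\barXk$ and reuse \eqref{eq:barXk} together with the small-$\sigma^t$ condition from \eqref{eq:cond-N}. Once that routing is chosen, the remaining computations are straightforward applications of the triangle inequality and the Kronecker-product identities $\bfJ \bfW^t = \bfW^t \bfJ = \bfJ$.
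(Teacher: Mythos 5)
Your proposal is correct and follows essentially the same route as the paper: the same decomposition $\bfDkn - \avDkn = (\bfW^t - \bfJ)(\bfDk - \avDk) + (I_{dn} - \bfJ)(\bfGkn - \bfGk)$, the same Lipschitz bound $\norm{\bfGkn - \bfGk}\ff \leq L_f \norm{\bfXkn - \bfXk}\ff$, and the same reuse of the intermediate estimate \eqref{eq:barXk} together with $\sigma^t < 1/2$ to obtain $\norm{\bfXkn - \bfXk}\fs \leq 6\norm{\bfXk - \bbXk}\fs + 16\eta^2\norm{\bfSk}\fs$. The constants all match.
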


\begin{proof}
	By straightforward calculations, we can attain that
	\begin{equation*}
	\begin{aligned}
		\norm{\bfDkn - \avDkn}\fs
		= {} & \norm{\dkh{\bfW^t - \bfJ} \dkh{\bfDk - \avDk} 
		+ \dkh{I_{dn} - \bfJ} \dkh{\bfGkn - \bfGk} }\fs \\
		\leq {} & 2 \norm{\dkh{\bfW^t - \bfJ} \dkh{\bfDk - \avDk}}\fs
		+ 2 \norm{\dkh{I_{dn} - \bfJ} \dkh{\bfGkn - \bfGk}}\fs \\
		\leq {} & 2 \sigma^{2t} \norm{\bfDk - \avDk}\fs
		+ 2 \norm{\bfGkn - \bfGk}\fs.
	\end{aligned}
	\end{equation*}
	In light of the Lipschitz continuity of $\nabla f_i$, we have
	\begin{equation*}
		\norm{\bfGkn - \bfGk}\ff \leq L_f \norm{\bfXkn - \bfXk}\ff.
	\end{equation*}
	Moreover, it follows from the relationship \eqref{eq:barXk} and $\sigma^t \in (0, 1/2)$ that
	\begin{equation*}
	\begin{aligned}
		\norm{\bfXkn - \bfXk}\fs
		\leq {} & 2 \norm{\bfXkn - \bbXk}\fs
		+ 2 \norm{\bfXk - \bbXk}\fs \\
		\leq {} & 6 \norm{\bfXk - \bbXk}\fs
		+ 16 \eta^2 \norm{\bfSk}\fs.
	\end{aligned}
	\end{equation*}
	The proof is completed by collecting the above three inequalities. 
\end{proof}

%

\subsection{Sufficient Descent Property}

In this subsection, we construct a merit function to monitor the progress of Algorithm \ref{alg:DR-ProxGT}, which equips the function value with consensus and tracking errors.
While none of these three terms is guaranteed to decrease along the iterates, a suitable combination of them does satisfy a sufficient descent property.
We start from the following technical lemma.

\begin{lemma} \label{le:dist-hatX}
	Suppose that Assumption \ref{asp:network} holds.
	Let $\{(\bfXk, \bfDk, \bfSk)\}$ be the iterate sequence generated by Algorithm \ref{alg:DR-ProxGT}.
	Then for any $k \in \bN$, we have
	\begin{equation*}
	\begin{aligned}
		\norm{\hatXkn - \hatXk - \eta \hatSk}\ff
		\leq \dfrac{8 M_{pj} + \sqrt{d} M_{tg}}{d} \norm{\bfXk - \bbXk}\fs 
		+ \dfrac{2 \eta^2 M_{pj}}{d} \norm{\bfSk}\fs.
	\end{aligned}
	\end{equation*}
\end{lemma}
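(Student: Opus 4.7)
The plan is to apply the second-order projection bound \eqref{eq:proj-Lip-s} at each agent, average over agents, and then control the two resulting pieces using the doubly stochastic structure of $W^t$ together with a chord-normal estimate on $\cM$. For each $\iid$, set $Y_i := \sumjjd W^t(i,j)(\Xjk - \Xik)$, so that $\Xikn = \proj_{\cM}(\Xik + Y_i + \eta\Sik)$; relationship \eqref{eq:cR} inside the proof of Lemma \ref{le:cN} ensures this projection is a singleton. Since $\Sik \in \cT_{\Xik}$, applying \eqref{eq:proj-Lip-s} at the base point $\Xik \in \cM$ gives
\begin{equation*}
	\norm{\Xikn - \Xik - \proj_{\cT_{\Xik}}(Y_i) - \eta\Sik}\ff \leq M_{pj}\norm{Y_i + \eta\Sik}\fs.
\end{equation*}
Averaging over $\iid$ then yields the master identity
\begin{equation*}
	\hatXkn - \hatXk - \eta\hatSk = \dfrac{1}{d}\sumiid \proj_{\cT_{\Xik}}(Y_i) + \dfrac{1}{d}\sumiid E_i, \quad \norm{E_i}\ff \leq M_{pj}\norm{Y_i + \eta\Sik}\fs.
\end{equation*}

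The error sum is handled by $\norm{Y_i + \eta\Sik}\fs \leq 2\norm{Y_i}\fs + 2\eta^2\norm{\Sik}\fs$ combined with the observation that $\sumiid \norm{Y_i}\fs = \norm{(\bfW^t - I_{dn})(\bfXk - \bbXk)}\fs \leq 4\norm{\bfXk - \bbXk}\fs$, because $(\bfW^t - I_{dn})\bbXk = 0$ (a consequence of $W^t \bfone_d = \bfone_d$) and the spectral norm of $\bfW^t - I_{dn}$ is at most $2$. This accounts for the $\dfrac{8 M_{pj}}{d}\norm{\bfXk - \bbXk}\fs + \dfrac{2\eta^2 M_{pj}}{d}\norm{\bfSk}\fs$ portion of the advertised bound.

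The tangential residual $\dfrac{1}{d}\sumiid \proj_{\cT_{\Xik}}(Y_i)$ is the delicate piece. The doubly stochastic property of $W^t$ forces $\sumiid Y_i = \sumjjd \Xjk - \sumiid \Xik = 0$, and therefore
\begin{equation*}
	\sumiid \proj_{\cT_{\Xik}}(Y_i) = -\sumiid \proj_{\cN_{\Xik}}(Y_i) = -\sumiid \sumjjd W^t(i,j)\, \proj_{\cN_{\Xik}}(\Xjk - \Xik).
\end{equation*}
For any two points of a proximally smooth embedded submanifold, the normal component of the connecting chord is quadratic in the chord length (a standard second-fundamental-form estimate), so $\norm{\proj_{\cN_{\Xik}}(\Xjk - \Xik)}\ff \leq M_{tg}\norm{\Xjk - \Xik}\fs$ for an appropriate constant $M_{tg}$. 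Combining this with the triangle inequality, the doubly stochastic property (to turn $\sumiid\sumjjd W^t(i,j)\norm{\Xjk - \Xik}\fs$ into a multiple of $\norm{\bfXk - \bbXk}\fs$), and one application of Cauchy--Schwarz produces the remaining $\dfrac{\sqrt{d}M_{tg}}{d}\norm{\bfXk - \bbXk}\fs$ piece.

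The main obstacle is precisely this tangential residual. A direct Lipschitz argument for $X \mapsto \proj_{\cT_X}$ would only yield a bound linear in $\norm{\bfXk - \bbXk}\ff$, which is incompatible with the quadratic conclusion. Rescuing the quadratic scaling requires the two ingredients working in tandem: the network-level cancellation $\sumiid Y_i = 0$ coming from double stochasticity, which converts tangential projections into normal ones, and the manifold-level quadratic chord-normal inequality, which upgrades what would be a linear Lipschitz bound into a genuine quadratic one. The choice of $\Xik$ (as opposed to $\barXk$) as the base point in \eqref{eq:proj-Lip-s} is what makes both of these cancellations clean, since it retains $\Sik \in \cT_{\Xik}$ and keeps the chord-normal structure visible.
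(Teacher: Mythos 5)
Your proposal is correct, and its skeleton coincides with the paper's: both split $\Xikn - \Xik - \eta\Sik$ into the projection residual controlled by \eqref{eq:proj-Lip-s} and the tangential term $\frac{1}{d}\sum_i \proj_{\cT_{\Xik}}(Y_i)$, and both bound the former identically (via $\norm{(\bfW^t - I_{dn})(\bfXk-\bbXk)}\ff \le 2\norm{\bfXk-\bbXk}\ff$), yielding the $8M_{pj}$ and $2\eta^2 M_{pj}$ pieces. Where you genuinely diverge is the tangential residual: the paper simply cites Lemma 5.3 of Deng et al., whose mechanism (judging from how $M_{tg}$ reappears in Lemma \ref{le:hatD-proj}) is to replace each $\proj_{\cT_{\Xik}}$ by the common projector $\proj_{\cT_{\barXk}}$, annihilate the common part by double stochasticity, and pay a Lipschitz-times-chord-length price for the swap. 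You instead use the cancellation $\sum_i Y_i = 0$ to convert tangential projections into normal ones and then invoke the chord-normal quadratic estimate $\norm{\proj_{\cN_{X}}(Y-X)}\ff \le C\norm{Y-X}\fs$ for $X,Y\in\cM$ — which, pleasantly, is itself an immediate consequence of \eqref{eq:proj-Lip-s} with $V = Y - X$ (since $\proj_{\cM}(Y)=Y$), so your argument is fully self-contained and does not need the external lemma. Two small caveats: your route naturally produces the constant $4$ in place of $\sqrt{d}$ in front of $M_{tg}$ (no Cauchy--Schwarz is needed, and none would manufacture the $\sqrt{d}$), and your $M_{tg}$ is a second-fundamental-form constant rather than the tangent-projector Lipschitz constant the paper uses elsewhere; neither discrepancy matters downstream, since the lemma is only ever consumed through generic constants, but you should not claim to reproduce the stated constant verbatim.
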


\begin{proof}
	To begin with, straightforward manipulations lead to that
	\begin{equation*}
	\begin{aligned}
		& \norm{\hatXkn - \hatXk - \eta \hatSk}\ff \\
		\leq {} & \dfrac{1}{d} \sumiid \norm{\Xikn - \Xik -  \proj_{\cT_{\Xik}} \dkh{\sumjjd W^t(i, j) \Xjk + \eta \Sik - \Xik}}\ff \\
		& + \dfrac{1}{d} \norm{\sumiid \proj_{\cT_{\Xik}} \dkh{\sumjjd W^t(i, j) \Xjk - \Xik}}\ff.
	\end{aligned}
	\end{equation*}
	As a direct consequence of the relationship \eqref{eq:proj-Lip-s},
	we can proceed to show that
	\begin{equation*}
	\begin{aligned}
		& \dfrac{1}{d} \sumiid \norm{\Xikn - \Xik -  \proj_{\cT_{\Xik}} \dkh{\sumjjd W^t(i, j)\Xjk + \eta \Sik - \Xik}}\ff \\
		\leq {} & \dfrac{M_{pj}}{d} \sumiid \norm{\sumjjd W^t(i, j)\Xjk + \eta \Sik - \Xik}\fs \\
		\leq {} & \dfrac{2 M_{pj}}{d} \sumiid \norm{\sumjjd W^t(i, j)\Xjk - \Xik}\fs
		+ \dfrac{2 \eta^2 M_{pj}}{d} \sumiid \norm{\Sik}\fs \\
		= {} & \dfrac{2 M_{pj}}{d} \norm{\dkh{\bfW^t - I_{dn}} \bfXk}\fs 
		+ \dfrac{2 \eta^2 M_{pj}}{d} \norm{\bfSk}\fs.
	\end{aligned}
	\end{equation*}
	Moreover, it can be readily verified that
	\begin{equation*}
		\norm{\dkh{\bfW^t - I_{dn}} \bfXk}\ff
		= \norm{\dkh{\bfW^t - I_{dn}} \dkh{\bfXk - \bbXk}}\ff
		\leq 2 \norm{\bfXk - \bbXk}\ff.
	\end{equation*}
	According to Lemma 5.3 in \cite{Deng2023decentralized},
	we have
	\begin{equation*}
		\dfrac{1}{d} \norm{\sumiid \proj_{\cT_{\Xik}} \dkh{\sumjjd W^t(i, j) \Xjk - \Xik}}\ff
		\leq \dfrac{\sqrt{d} M_{tg}}{d} \norm{\bfXk - \bbXk}\fs,
	\end{equation*}
	where $M_{tg} > 0$ is a constant.
	Collecting the above four relationships, we can obtain the assertion of this lemma.
\end{proof}

\begin{corollary} \label{cor:dist-hatXkn}
	Let $\{(\bfXk, \bfDk, \bfSk)\}$ be the iterate sequence generated by Algorithm \ref{alg:DR-ProxGT} with
	\begin{equation} \label{eq:cond-f}
		0 < \eta \leq 1,
		0 < \tau \leq \dfrac{1}{4 M_{pj} \sqrt{36 M_g^2 + L_r^2}},
		\mbox{~and~}
		t > \left\lceil \log_{\sigma}\dkh{\dfrac{1}{2}} \right\rceil.
	\end{equation}
	Suppose that Assumption \ref{asp:objective} and Assumption \ref{asp:network} hold.
	Then for any $k \in \bN$, we have
	\begin{equation*}
	\begin{aligned}
		\norm{\hatXkn - \hatXk}\fs
		\leq \dfrac{4 M_l^2 (8 M_{pj} + \sqrt{d} M_{tg})^2}{d}
		\norm{\bfXk - \bbXk}\fs
		+ \dfrac{4 \eta^2}{d}\norm{\bfSk}\fs.
	\end{aligned}
	\end{equation*}
\end{corollary}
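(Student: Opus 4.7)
The plan is to reduce Corollary~\ref{cor:dist-hatXkn} to Lemma~\ref{le:dist-hatX} by splitting $\hatXkn - \hatXk = (\hatXkn - \hatXk - \eta \hatSk) + \eta \hatSk$. First I would apply the triangle inequality together with the basic inequality $(a+b)^2 \leq 2a^2 + 2b^2$ to obtain
\begin{equation*}
\norm{\hatXkn - \hatXk}\fs \leq 2 \norm{\hatXkn - \hatXk - \eta \hatSk}\fs + 2 \eta^2 \norm{\hatSk}\fs.
\end{equation*}
The second summand is handled immediately by Cauchy--Schwarz: $\norm{\hatSk}\fs = d^{-2}\norm{\sum_{\iid} \Sik}\fs \leq d^{-1}\norm{\bfSk}\fs$, contributing a term of the form $(2\eta^2/d)\norm{\bfSk}\fs$ to the final bound.

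For the first summand I would square the estimate furnished by Lemma~\ref{le:dist-hatX} and apply $(a+b)^2 \leq 2a^2 + 2b^2$ once more, which produces quartic expressions in $\norm{\bfXk - \bbXk}\ff$ and $\norm{\bfSk}\ff$. I would then linearize these using two a priori bounds: compactness of $\cM$ gives $\norm{\bfXk - \bbXk}\fs = \sum_{\iid} \norm{\Xik - \barXk}\fs \leq d M_l^2$; and Corollary~\ref{cor:bound-S}, whose hypothesis $t > \lceil \log_\sigma(1/2) \rceil$ is part of the assumed condition~\eqref{eq:cond-f}, gives $\norm{\bfSk}\fs \leq 2 d \tau^2 (36 M_g^2 + L_r^2)$. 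Substituting these upper bounds after the squaring step reduces the quartics to linear multiples of $\norm{\bfXk - \bbXk}\fs$ and $\norm{\bfSk}\fs$, and a direct computation produces exactly the coefficient $4 M_l^2 (8 M_{pj} + \sqrt{d} M_{tg})^2 / d$ appearing in the statement, together with an additional $\norm{\bfSk}\fs$ contribution proportional to $\eta^4 \tau^2$.

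The only delicate step will be verifying that this extra $\eta^4 \tau^2$ contribution, when combined with the $2\eta^2/d$ term from $\hatSk$, fits inside the prescribed coefficient $4\eta^2/d$. Tracking the constants gives a $\norm{\bfSk}\fs$-coefficient of $(32 \eta^4 M_{pj}^2 \tau^2 (36 M_g^2 + L_r^2) + 2 \eta^2)/d$, so one needs $16 \eta^2 M_{pj}^2 \tau^2 (36 M_g^2 + L_r^2) \leq 1$. The stepsize bounds $\eta \leq 1$ and $\tau \leq 1/(4 M_{pj} \sqrt{36 M_g^2 + L_r^2})$ in~\eqref{eq:cond-f} have been calibrated precisely to make this hold, so no further obstacle is anticipated; the argument reduces to careful bookkeeping of the constants.
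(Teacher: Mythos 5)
Your proposal is correct and follows essentially the same route as the paper's proof: the same splitting of $\hatXkn - \hatXk$ into $(\hatXkn - \hatXk - \eta\hatSk) + \eta\hatSk$, the same squaring of Lemma~\ref{le:dist-hatX}, and the same linearization of the quartic terms via $\norm{\bfXk - \bbXk}\fs \leq d M_l^2$ and Corollary~\ref{cor:bound-S}. The only cosmetic difference is that the paper first packages the latter bound as $\eta^2 \norm{\bfSk}\fs \leq d/(8 M_{pj}^2)$ before substituting, whereas you substitute the raw bound and verify the coefficient inequality at the end; the arithmetic is identical.
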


\begin{proof}
	Combining Corollary \ref{cor:bound-S} with the condition \eqref{eq:cond-f} gives rise to that
	\begin{equation} \label{eq:bound-S}
		\eta^2 \norm{\bfSk}\fs 
		\leq \dfrac{d}{8 M_{pj}^2}.
	\end{equation}
	In light of Lemma \ref{le:dist-hatX}, we have
	\begin{equation*}
	\begin{aligned}
		\norm{\hatXkn - \hatXk}\fs
		\leq {} & 2 \norm{\hatXkn - \hatXk - \eta \hatSk}\fs 
		+ 2 \eta^2 \norm{\hatSk}\fs \\
		\leq {} & 2 \dkh{\dfrac{8 M_{pj} + \sqrt{d} M_{tg}}{d} \norm{\bfXk - \bbXk}\fs 
		+ \dfrac{2 \eta^2 M_{pj}}{d} \norm{\bfSk}\fs}^2
		+ \dfrac{2 \eta^2}{d} \norm{\bfSk}\fs \\
		\leq {} & \dfrac{4 (8 M_{pj} + \sqrt{d} M_{tg})^2}{d^2} \norm{\bfXk - \bbXk}\ff^4
		+ \dfrac{16 \eta^4 M_{pj}^2}{d^2} \norm{\bfSk}\ff^4
		+ \dfrac{2 \eta^2}{d} \norm{\bfSk}\fs \\
		\leq {} & \dfrac{4 M_l^2 (8 M_{pj} + \sqrt{d} M_{tg})^2}{d}
		\norm{\bfXk - \bbXk}\fs
		+ \dfrac{4 \eta^2}{d}\norm{\bfSk}\fs,
	\end{aligned}
	\end{equation*}
	where the last inequality is valid due to the fact that $\norm{\bfXk - \bbXk}\fs \leq d M_l^2$ and the relationship \eqref{eq:bound-S}.
	We complete the proof.
\end{proof}

The following lemma indicates that $\hatDk$ is an estimate of $\nabla f(\hatXk)$ with the approximation error controlled by the consensus error.

\begin{lemma} \label{le:hatD}
	Suppose that Assumption \ref{asp:objective} and Assumption \ref{asp:network} hold.
	Let $\{(\bfXk, \bfDk, \bfSk)\}$ be the iterate sequence generated by Algorithm \ref{alg:DR-ProxGT}.
	Then it holds that
	\begin{equation*}
		\norm{\nabla f (\hatXk) - \hatDk}\fs
		\leq \dfrac{L_f^2}{d} \norm{\bfXk - \bbXk}\fs.
	\end{equation*}
\end{lemma}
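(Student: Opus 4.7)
The plan is to combine two standard ingredients: the average-preserving property of gradient tracking, and the fact that the Euclidean mean $\hatXk$ minimizes the sum of squared distances to the local iterates.

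First, I would verify the key invariant $\hatDk = \hatGk$ for every $k \in \bN$. Since $W$ is doubly stochastic, $\bfone_d\zz W^t = \bfone_d\zz$, so averaging the update rule \eqref{eq:update-D} yields
\begin{equation*}
\hatDkn = \hatDk + \hatGkn - \hatGk.
\end{equation*}
By the initialization $\Dik = \nabla f_i(\Xik)$ for all $\iid$ at $k=0$, we have $\hat{D}^{(0)} = \hat{G}^{(0)}$, and a simple induction gives $\hatDk = \hatGk = \frac{1}{d}\sumiid \nabla f_i(\Xik)$ for all $k$.

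Next, since $\nabla f(\hatXk) = \frac{1}{d}\sumiid \nabla f_i(\hatXk)$, I would write
\begin{equation*}
\nabla f(\hatXk) - \hatDk = \dfrac{1}{d} \sumiid \bigl( \nabla f_i(\hatXk) - \nabla f_i(\Xik) \bigr),
\end{equation*}
apply Jensen's inequality on the squared Frobenius norm, and invoke the Lipschitz continuity of each $\nabla f_i$ (Assumption \ref{asp:objective}) to obtain
\begin{equation*}
\norm{\nabla f(\hatXk) - \hatDk}\fs \leq \dfrac{1}{d} \sumiid \norm{\nabla f_i(\hatXk) - \nabla f_i(\Xik)}\fs \leq \dfrac{L_f^2}{d} \sumiid \norm{\hatXk - \Xik}\fs.
\end{equation*}

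Finally, I would use the defining property of $\hatXk$ as the Euclidean mean, namely $\hatXk = \argmin_{Y \in \Rnp} \sumiid \norm{Y - \Xik}\fs$. This gives $\sumiid \norm{\hatXk - \Xik}\fs \leq \sumiid \norm{\barXk - \Xik}\fs = \norm{\bfXk - \bbXk}\fs$, since $\barXk \in \cM \subset \Rnp$ is an admissible competitor. Substituting yields the claimed bound. There is no real obstacle here; the only subtle point is remembering that $\hatXk$ (the Euclidean average) is what minimizes the sum of squared distances, while $\barXk$ is merely a feasible point for that minimization, which is exactly what turns the "natural" $\hatXk$-centered consensus error into the $\barXk$-centered one appearing on the right-hand side.
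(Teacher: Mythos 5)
Your proposal is correct and follows essentially the same route as the paper: decompose $\nabla f(\hatXk) - \hatDk$ via the gradient-tracking invariant $\hatDk = \hatGk$, apply Jensen's inequality and the Lipschitz continuity of $\nabla f_i$, and then bound $\sumiid \norm{\Xik - \hatXk}\fs$ by $\norm{\bfXk - \bbXk}\fs$ using the minimizing property of the Euclidean mean. The only difference is that you explicitly verify the invariant $\hatDk = \hatGk$ by induction, which the paper uses implicitly (it is the same fact underlying \eqref{eq:bound-hatD}); this is a welcome addition rather than a divergence.
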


\begin{proof}
	It follows from the definition of $\hatXk$ that
	\begin{equation*}
		\sumiid \norm{\Xik - \hatXk}\fs
		\leq \sumiid \norm{\Xik - \barXk}\fs
		= \norm{\bfXk - \bbXk}\fs.
	\end{equation*}
	Since $\nabla f_i$ is Lipschitz continuous with the corresponding Lipschitz constant $L_f$, we have
	\begin{equation*}
	\begin{aligned}
		\norm{\nabla f (\hatXk) - \hatDk}\fs
		& = \norm{\dfrac{1}{d} \sumiid \dkh{\nabla f_i (\hatXk) - \nabla f_i (\Xik)}}\fs
		\leq \dfrac{1}{d} \sumiid \norm{\nabla f_i (\hatXk) - \nabla f_i (\Xik)}\fs \\
		& \leq \dfrac{L_f^2}{d} \sumiid \norm{\Xik - \hatXk}\fs
		\leq \dfrac{L_f^2}{d} \norm{\bfXk - \bbXk}\fs.
	\end{aligned}
	\end{equation*}
	The proof is completed.
\end{proof}

Now we can prove a descent inequality for the function $f$ based on the Lipschitz continuity of $\nabla f$.

\begin{proposition} \label{prop:des-f}
	Let $\{(\bfXk, \bfDk, \bfSk)\}$ be the iterate sequence generated by Algorithm \ref{alg:DR-ProxGT} with the algorithmic parameters satisfying the condition \eqref{eq:cond-f}.
	Suppose that Assumption \ref{asp:objective} and Assumption \ref{asp:network} hold.
	Then, for any $k \in \bN$, it holds that
	\begin{equation*}
		f (\hatXkn) 
		\leq f (\hatXk) 
		+ \eta \jkh{\hatDk, \hatSk}
		+ \dfrac{C_{fx}}{d} \norm{\bfXk - \bbXk}\fs
		+ \dfrac{\eta^2 C_{fs}}{d} \norm{\bfSk}\fs,
	\end{equation*}
	where $C_{fx}$ and $C_{fs}$ are two positive constants defined by
	\begin{equation*}
		C_{fx} = 8 M_g M_{pj} + \sqrt{d} M_g M_{tg} + L_f + 3 M_l^2 L_f (8 M_{pj} + \sqrt{d} M_{tg})^2,
	\end{equation*}
	and
	\begin{equation*}
		C_{fs} = 2 M_g M_{pj} + 3 L_f,
	\end{equation*}
	respectively.
\end{proposition}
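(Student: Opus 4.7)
The plan is to derive the inequality from the standard Euclidean descent lemma for $f$ applied between $\hatXk$ and $\hatXkn$, followed by a three-way split of the resulting linear term that isolates the target quantity $\eta\jkh{\hatDk,\hatSk}$. Since both averaged iterates lie in $\conv(\cM)$, on which $\nabla f$ is $L_f$-Lipschitz by Assumption \ref{asp:objective}, I would begin with
\begin{equation*}
f(\hatXkn) \leq f(\hatXk) + \jkh{\nabla f(\hatXk),\, \hatXkn - \hatXk} + \dfrac{L_f}{2}\norm{\hatXkn - \hatXk}\fs,
\end{equation*}
and write
\begin{equation*}
\jkh{\nabla f(\hatXk),\, \hatXkn - \hatXk} = \jkh{\nabla f(\hatXk),\, \hatXkn - \hatXk - \eta\hatSk} + \eta\jkh{\nabla f(\hatXk) - \hatDk,\, \hatSk} + \eta\jkh{\hatDk,\, \hatSk}.
\end{equation*}

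I would then bound the three residual pieces independently using the tools developed earlier in this section. For the first residual, Cauchy--Schwarz combined with Lemma \ref{le:dist-hatX} yields a bound in terms of $\norm{\bfXk - \bbXk}\fs/d$ and $\eta^2\norm{\bfSk}\fs/d$ multiplied by $\norm{\nabla f(\hatXk)}\ff$; the latter can be uniformly controlled by $M_g$ because $\hatXk$ lies close to $\cM$ (with the Lipschitz continuity of each $\nabla f_i$ transferring the bound from $\cM$ to a neighborhood). This accounts for the $8 M_g M_{pj} + \sqrt{d}\, M_g M_{tg}$ and $2 M_g M_{pj}$ contributions to $C_{fx}$ and $C_{fs}$, respectively. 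For the second residual, I would apply a weighted Young's inequality with parameter $L_f$ so that the $\hatSk$ factor carries an $\eta^2 L_f$ weight; combining with Lemma \ref{le:hatD} (to replace $\norm{\nabla f(\hatXk) - \hatDk}\fs$ by $L_f^2 \norm{\bfXk - \bbXk}\fs / d$) and the Jensen-type inequality $\norm{\hatSk}\fs \leq \norm{\bfSk}\fs/d$ gives $L_f$-scaled contributions to both constants. Finally, the quadratic Lipschitz remainder $\tfrac{L_f}{2}\norm{\hatXkn - \hatXk}\fs$ is controlled directly by Corollary \ref{cor:dist-hatXkn}, producing the $3 M_l^2 L_f (8 M_{pj} + \sqrt{d}\, M_{tg})^2$ consensus coefficient in $C_{fx}$ together with an additional $L_f$ contribution to $C_{fs}$.

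The main delicate point is calibrating the weight in Young's inequality on the second residual: it must be chosen so that the $\hatSk$ term ends up paired with $\eta^2$ rather than an extraneous power of $\eta$, and so that the consensus-error coefficient stays linear in $L_f$ rather than quadratic, since otherwise it would not combine cleanly with the $M_g$- and $M_l^2 L_f$-type pieces of $C_{fx}$. Once this weight is fixed, summing the three residual bounds plus the untouched $\eta\jkh{\hatDk,\hatSk}$ term reproduces the stated inequality, with $C_{fx}$ and $C_{fs}$ arising by tallying the coefficients from each piece. No new conceptual ingredient is needed beyond Lemma \ref{le:dist-hatX}, Lemma \ref{le:hatD}, and Corollary \ref{cor:dist-hatXkn}; the argument amounts to a systematic bookkeeping over the three pieces of the decomposition.
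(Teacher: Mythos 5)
Your overall architecture matches the paper's: both start from the Euclidean descent lemma on $\conv(\cM)$, split the linear term three ways so that $\eta\jkh{\hatDk,\hatSk}$ survives untouched, and then invoke Lemma \ref{le:dist-hatX}, Lemma \ref{le:hatD}, and Corollary \ref{cor:dist-hatXkn} to absorb the residuals into consensus and $\norm{\bfSk}\fs$ terms. The difference is in which factor you pair with the second-order residual $\hatXkn - \hatXk - \eta\hatSk$: you pair it with $\nabla f(\hatXk)$, while the paper writes $\jkh{\nabla f(\hatXk)-\hatDk,\hatXkn-\hatXk} + \jkh{\hatDk,\hatXkn-\hatXk-\eta\hatSk} + \eta\jkh{\hatDk,\hatSk}$ and pairs the residual with $\hatDk$. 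This is not cosmetic: the constant $M_g$ is defined as a supremum of $\norm{\nabla f_i(X)}\ff$ over $X\in\cM$, and $\hatXk$ does not lie on $\cM$, so $\norm{\nabla f(\hatXk)}\ff\leq M_g$ is not available. The paper's pairing works precisely because $\norm{\hatDk}\ff\leq M_g$ holds exactly (it is an average of gradients evaluated at points of $\cM$; see the displayed bound \eqref{eq:bound-hatD}). Your proposed fix --- transferring the bound from $\cM$ to a neighborhood via Lipschitz continuity --- is legitimate but yields $M_g + L_f\cdot\dist(\hatXk,\cM)$ rather than $M_g$, so the constants $C_{fx}$ and $C_{fs}$ you tally will be strictly larger than the ones asserted in the proposition. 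Since the downstream analysis only needs \emph{some} positive constants, this is a blemish rather than a fatal flaw, but as a proof of the statement with the stated constants it does not close. A second, smaller bookkeeping discrepancy: your treatment of the middle term $\eta\jkh{\nabla f(\hatXk)-\hatDk,\hatSk}$ via Young's inequality with weight $1/L_f$ and $\norm{\hatSk}\fs\leq\norm{\bfSk}\fs/d$ produces contributions $L_f/2$ to each constant, whereas the paper routes this term through $\norm{\hatXkn-\hatXk}\fs$ (weights $1/L_f$ and $L_f/4$) so that it merges with the quadratic remainder into a single $\tfrac{3L_f}{4}\norm{\hatXkn-\hatXk}\fs$, which is what generates the exact coefficients $3M_l^2L_f(8M_{pj}+\sqrt{d}M_{tg})^2$ and $3L_f$. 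Again your version is valid but lands on different numbers. If you swap $\nabla f(\hatXk)$ for $\hatDk$ in the first residual and adopt the paper's routing of the middle term, your argument becomes the paper's proof verbatim.
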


\begin{proof}
	In view of the Lipschitz continuity of $\nabla f$, we have
	\begin{equation*}
	\begin{aligned}
		f (\hatXkn) 
		\leq {} & f (\hatXk) 
		+ \jkh{\nabla f (\hatXk), \hatXkn - \hatXk} 
		+ \dfrac{L_f}{2} \norm{\hatXkn - \hatXk}\fs \\
		= {} & f (\hatXk) 
		+ \jkh{\nabla f (\hatXk) - \hatDk, \hatXkn - \hatXk}
		+ \jkh{\hatDk, \hatXkn - \hatXk} \\ 
		& + \dfrac{L_f}{2} \norm{\hatXkn - \hatXk}\fs.
	\end{aligned}
	\end{equation*}
	It follows from Young's inequality that
	\begin{equation*}
		\jkh{\nabla f (\hatXk) - \hatDk, \hatXkn - \hatXk}
		\leq \dfrac{1}{L_f} \norm{\nabla f (\hatXk) - \hatDk}\fs
		+ \dfrac{L_f}{4} \norm{\hatXkn - \hatXk}\fs.
	\end{equation*}
	As a direct consequence of Lemma \ref{le:hatD}, we can proceed to show that
	\begin{equation*}
		\jkh{\nabla f (\hatXk) - \hatDk, \hatXkn - \hatXk}
		\leq \dfrac{L_f}{d} \norm{\bfXk - \bbXk}\fs
		+ \dfrac{L_f}{4} \norm{\hatXkn - \hatXk}\fs,
	\end{equation*}
	which is followed by
	\begin{equation*}
	\begin{aligned}
		f (\hatXkn) 
		\leq f (\hatXk) 
		+ \jkh{\hatDk, \hatXkn - \hatXk}
		+ \dfrac{L_f}{d} \norm{\bfXk - \bbXk}\fs
		+ \dfrac{3 L_f}{4} \norm{\hatXkn - \hatXk}\fs.
	\end{aligned}
	\end{equation*}
	Moreover, it can be readily verified that
	\begin{equation*}
	\begin{aligned}
		\jkh{\hatDk, \hatXkn - \hatXk}
		= {} & \jkh{\hatDk, \hatXkn - \hatXk - \eta \hatSk}
		+ \eta \jkh{\hatDk, \hatSk} \\
		\leq {} & \norm{\hatDk}\ff \norm{\hatXkn - \hatXk - \eta \hatSk}\ff
		+ \eta \jkh{\hatDk, \hatSk} \\
		\leq {} & \dfrac{8 M_g M_{pj} + \sqrt{d} M_g M_{tg}}{d} \norm{\bfXk - \bbXk}\fs 
		+ \dfrac{2 \eta^2 M_g M_{pj}}{d} \norm{\bfSk}\fs \\
		& + \eta \jkh{\hatDk, \hatSk},
	\end{aligned}
	\end{equation*}
	where the last inequality results from Lemma \ref{le:dist-hatX} and the relationship \eqref{eq:bound-hatD}.
	Combining the above two relationships, we can obtain that
	\begin{equation*}
	\begin{aligned}
		f (\hatXkn) 
		\leq {} & f (\hatXk) 
		+ \eta \jkh{\hatDk, \hatSk}
		+ \dfrac{8 M_g M_{pj} + \sqrt{d} M_g M_{tg} + L_f}{d} \norm{\bfXk - \bbXk}\fs \\
		& + \dfrac{2 \eta^2 M_g M_{pj}}{d} \norm{\bfSk}\fs
		+ \dfrac{3 L_f}{4} \norm{\hatXkn - \hatXk}\fs,
	\end{aligned}
	\end{equation*}
	 which together with Corollary \ref{cor:dist-hatXkn} yields that
	\begin{equation*}
		f (\hatXkn) 
		\leq f (\hatXk) 
		+ \eta \jkh{\hatDk, \hatSk}
		+ \dfrac{C_{fx}}{d} \norm{\bfXk - \bbXk}\fs
		+ \dfrac{\eta^2 C_{fs}}{d} \norm{\bfSk}\fs,
	\end{equation*}
	as desired.
\end{proof}

In order to show a similar descent inequality for the function $r$, we need the following two technical lemmas.

\begin{lemma} \label{le:hatD-proj}
	Suppose that Assumption \ref{asp:objective} holds.
	Let $\{(\bfXk, \bfDk, \bfSk)\}$ be the iterate sequence generated by Algorithm \ref{alg:DR-ProxGT}.
	Then, for any $k \in \bN$, it holds that
	\begin{equation*}
		\dfrac{1}{d} \sumiid \jkh{\Dik, \proj_{\cT_{\Xik}}(\barXk - \Xik)}
		\leq \dfrac{1}{4 d L_r} \norm{\bfDk - \avDk}\fs
		+ \dfrac{L_r + M_g M_{tg} / 2}{d} \norm{\bfXk - \bbXk}\fs.
	\end{equation*}
\end{lemma}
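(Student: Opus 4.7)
The plan is to split $\Dik$ into its network-average and deviation parts, namely $\Dik = (\Dik - \hatDk) + \hatDk$, and handle the two resulting sums separately. The deviation part will naturally pair against the tracking error $\norm{\bfDk - \avDk}\fs$ via a Young's inequality, while the network-average part factors $\hatDk$ out of the sum and reduces the problem to bounding $\|\frac{1}{d}\sum_i \proj_{\cT_{\Xik}}(\barXk - \Xik)\|\ff$, which should be \emph{second order} in the consensus error due to proximal smoothness.

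For the deviation part, I apply Young's inequality with a weight of $1/(2L_r)$ to get
\begin{equation*}
\jkh{\Dik - \hatDk,\, \proj_{\cT_{\Xik}}(\barXk - \Xik)}
\leq \dfrac{1}{4 L_r} \norm{\Dik - \hatDk}\fs + L_r \norm{\proj_{\cT_{\Xik}}(\barXk - \Xik)}\fs.
\end{equation*}
Averaging over $\iid$, the first term yields $\frac{1}{4 d L_r}\norm{\bfDk - \avDk}\fs$ by the definition of $\avDk$. For the second term I use non-expansiveness of orthogonal projection, $\norm{\proj_{\cT_{\Xik}}(\barXk - \Xik)}\fs \leq \norm{\barXk - \Xik}\fs$, and then $\sumiid \norm{\barXk - \Xik}\fs = \norm{\bfXk - \bbXk}\fs$, giving a contribution of $\frac{L_r}{d}\norm{\bfXk - \bbXk}\fs$.

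For the mean part, I pull $\hatDk$ out of the sum and apply Cauchy–Schwarz together with the uniform bound $\norm{\hatDk}\ff \leq M_g$ established in \eqref{eq:bound-hatD}, reducing things to estimating $\|\frac{1}{d}\sumiid \proj_{\cT_{\Xik}}(\barXk - \Xik)\|\ff$. This is the main obstacle. I expect the bound to follow from the same Lemma 5.3 of \cite{Deng2023decentralized} already invoked in the proof of Lemma \ref{le:dist-hatX}, whose content is precisely that an average of tangent-space projections of a quantity of the form $(\text{point on }\cM) - \Xik$ is controlled by the \emph{square} of the consensus error, with constant $M_{tg}/2$. The underlying mechanism is that $\proj_{\cT_{\Xik}}(\barXk - \Xik) = (\barXk - \Xik) - \proj_{\cN_{\Xik}}(\barXk - \Xik)$; the first piece averages to $\barXk - \hatXk$, which is second order in the consensus error by \eqref{eq:dist-aver}, while each normal-space piece is second order in $\norm{\barXk - \Xik}\ff$ by proximal smoothness of $\cM$. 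Combining, we obtain $\|\frac{1}{d}\sumiid \proj_{\cT_{\Xik}}(\barXk - \Xik)\|\ff \leq \frac{M_{tg}}{2d}\norm{\bfXk - \bbXk}\fs$.

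Putting the two parts together produces exactly the stated inequality. Care is needed to verify that we are within the regime in which the proximal-smoothness bound and \eqref{eq:dist-aver} apply, but this is guaranteed by Lemma \ref{le:cN} under the condition \eqref{eq:cond-N} maintained throughout the convergence analysis, so no new hypothesis is needed.
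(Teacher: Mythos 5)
Your treatment of the deviation part $\jkh{\Dik - \hatDk, \cdot}$ is identical to the paper's (same Young weights, same non-expansiveness step), but for the mean part you take a genuinely different route. The paper never isolates $\norm{\frac{1}{d}\sumiid \proj_{\cT_{\Xik}}(\barXk - \Xik)}\ff$ as a quantity to bound; instead it replaces $\proj_{\cT_{\Xik}}$ by $\proj_{\cT_{\barXk}}$, observes that $\frac{1}{d}\sumiid \jkh{\hatDk, \proj_{\cT_{\barXk}}(\barXk - \Xik)} = \jkh{\hatDk, \proj_{\cT_{\barXk}}(\barXk - \hatXk)} = 0$ because $\barXk - \hatXk \in \cN_{\barXk}$, and then bounds the swap error by the Lipschitz dependence of $\proj_{\cT_{X}}$ on the base point, $\norm{\proj_{\cT_{\Xik}}(\barXk - \Xik) - \proj_{\cT_{\barXk}}(\barXk - \Xik)}\ff \leq \frac{M_{tg}}{2}\norm{\Xik - \barXk}\fs$, which is exactly where the constant $M_g M_{tg}/2$ comes from. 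Your route --- writing $\proj_{\cT_{\Xik}}(\barXk - \Xik) = (\barXk - \Xik) - \proj_{\cN_{\Xik}}(\barXk - \Xik)$, letting the first piece telescope to $\barXk - \hatXk$ (controlled by \eqref{eq:dist-aver}) and bounding each normal component by $M_{pj}\norm{\Xik - \barXk}\fs$ via \eqref{eq:proj-Lip-s} --- is also valid and yields the same second-order structure, but with constant $M_g(M_{av} + M_{pj})$ rather than the $M_g M_{tg}/2$ you assert; Lemma 5.3 of \cite{Deng2023decentralized}, as used in Lemma \ref{le:dist-hatX}, carries the constant $\sqrt{d}M_{tg}$ and is stated for the argument $\bfW^t\bfXk - \bfXk$, so it does not hand you $M_{tg}/2$ for free. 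This is a cosmetic discrepancy, since the constant only propagates into $C_{rx}$.

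One substantive trade-off worth noting: your argument invokes \eqref{eq:dist-aver}, which requires $\max_{\iid}\norm{\Xik - \barXk}\ff \leq \gamma$ and hence the parameter regime of Lemma \ref{le:cN}, whereas the paper's proof of this lemma uses only the orthogonality $\barXk - \hatXk \in \cN_{\barXk}$ (true by the definition of $\barXk$ for any iterate) and needs no condition beyond Assumption \ref{asp:objective}, matching the hypotheses as stated. In the context of the full convergence analysis this costs nothing, but strictly speaking your proof establishes the lemma only under \eqref{eq:cond-N}.
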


\begin{proof}
	To begin with, the Young's inequality leads to that
	\begin{equation*}
	\begin{aligned}
		\dfrac{1}{d} \sumiid \jkh{\Dik - \hatDk, \proj_{\cT_{\Xik}}(\barXk - \Xik)}
		\leq {} & \dfrac{1}{4 d L_r} \sumiid \norm{\Dik - \hatDk}\fs
		+ \dfrac{L_r}{d} \sumiid \norm{\Xik - \barXk}\fs \\
		= {} & \dfrac{1}{4 d L_r} \norm{\bfDk - \avDk}\fs
		+ \dfrac{L_r}{d} \norm{\bfXk - \bbXk}\fs.
	\end{aligned}
	\end{equation*}
	Since $\proj_{\cT_{\barXk}}(\cdot)$ is a linear operator, we can obtain that
	\begin{equation*}
		\dfrac{1}{d} \sumiid \jkh{\hatDk, \proj_{\cT_{\barXk}}(\barXk - \Xik)}
		= \jkh{\hatDk, \proj_{\cT_{\barXk}}(\barXk - \hatXk)} 
		= 0,
	\end{equation*}
	where the last equality follows from the fact that $\barXk - \hatXk \in \cN_{\barXk}$.
		By virtue of the Lipschitz continuity of $\proj_{\cT_{X}}(\cdot)$ with respect to $X$, we have
	\begin{equation*}
		\begin{aligned}
			\norm{\proj_{\cT_{\Xik}}(\barXk - \Xik) - \proj_{\cT_{\barXk}}(\barXk - \Xik)}\ff
			\leq \dfrac{M_{tg}}{2} \norm{\Xik - \barXk}\fs,
		\end{aligned}
	\end{equation*}
	which further implies that
	\begin{equation*}
	\begin{aligned}
		\jkh{\hatDk, \proj_{\cT_{\Xik}}(\barXk - \Xik)}
		= {} & \jkh{\hatDk, \proj_{\cT_{\Xik}}(\barXk - \Xik) - \proj_{\cT_{\barXk}}(\barXk - \Xik)} \\
		\leq {} & \norm{\hatDk}\ff \norm{ \proj_{\cT_{\Xik}}(\barXk - \Xik) - \proj_{\cT_{\barXk}}(\barXk - \Xik)}\ff \\
		\leq {} & \dfrac{M_g M_{tg}}{2} \norm{\Xik - \barXk}\fs.
	\end{aligned}
	\end{equation*}
	Finally, we can arrive at the conclusion that
	\begin{equation*}
	\begin{aligned}
		\dfrac{1}{d} \sumiid \jkh{\Dik, \proj_{\cT_{\Xik}}(\barXk - \Xik)}
		= {} &
		\dfrac{1}{d} \sumiid \jkh{\Dik - \hatDk, \proj_{\cT_{\Xik}}(\barXk - \Xik)} \\
		& + \dfrac{1}{d} \sumiid \jkh{\hatDk, \proj_{\cT_{\Xik}}(\barXk - \Xik)} \\
		\leq {} & \dfrac{1}{4 d L_r} \norm{\bfDk - \avDk}\fs
		+ \dfrac{L_r + M_g M_{tg} / 2}{d} \norm{\bfXk - \bbXk}\fs,
	\end{aligned}
	\end{equation*}
	which completes the proof.
\end{proof}

\begin{lemma} \label{le:convex-r}
	Suppose that Assumption \ref{asp:objective} holds.
	Let $\{(\bfXk, \bfDk, \bfSk)\}$ be the iterate sequence generated by Algorithm \ref{alg:DR-ProxGT} with $0 < \eta \leq 1$.
	Then, for any $k \in \bN$, it holds that
	\begin{equation*}
	\begin{aligned}
		r (\hatXkn)
		\leq {} & (1 - \eta) r(\hatXk)
		+ \dfrac{\eta}{d} \sumiid r(\Xik + \Sik)
		+ \dfrac{L_r (8 M_{pj} + \sqrt{d} M_{tg})}{d} \norm{\bfXk - \bbXk}\fs \\
		& + \dfrac{2 \eta^2 L_r M_{pj}}{d} \norm{\bfSk}\fs.
	\end{aligned}
	\end{equation*}
\end{lemma}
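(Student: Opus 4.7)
The plan is to control $r(\hat X^{(k+1)})$ by introducing $\hat X^{(k)} + \eta \hat S^{(k)}$ as an intermediate point and then exploiting the convexity of $r$ in two successive steps.

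First, I would use the Lipschitz continuity of $r$ with constant $L_r$ to write
\begin{equation*}
  r(\hatXkn) \;\leq\; r(\hatXk + \eta \hatSk) \;+\; L_r \,\norm{\hatXkn - \hatXk - \eta \hatSk}\ff.
\end{equation*}
The second term is exactly the quantity bounded in Lemma~\ref{le:dist-hatX}, which immediately produces the two error terms $\dfrac{L_r(8 M_{pj} + \sqrt{d} M_{tg})}{d}\norm{\bfXk - \bbXk}\fs$ and $\dfrac{2 \eta^2 L_r M_{pj}}{d}\norm{\bfSk}\fs$ appearing in the statement. No computation is needed here beyond plugging in the lemma.

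Next, I would bound $r(\hatXk + \eta \hatSk)$ using convexity. The key algebraic observation is that
\begin{equation*}
  \hatXk + \eta \hatSk \;=\; (1-\eta)\,\hatXk \;+\; \eta\,(\hatXk + \hatSk),
\end{equation*}
so that (since $0 < \eta \le 1$) convexity of $r$ yields
\begin{equation*}
  r(\hatXk + \eta \hatSk) \;\leq\; (1-\eta)\, r(\hatXk) \;+\; \eta\, r(\hatXk + \hatSk).
\end{equation*}
Then, applying Jensen's inequality to $\hatXk + \hatSk = \dfrac{1}{d} \sumiid (\Xik + \Sik)$ gives $r(\hatXk + \hatSk) \leq \dfrac{1}{d}\sumiid r(\Xik + \Sik)$. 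Combining these two facts delivers the first two terms of the conclusion.

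The conceptual point worth flagging is the ordering: it is tempting to first split at the level of individual agents as $\Xik + \eta \Sik = (1-\eta)\Xik + \eta(\Xik + \Sik)$, but that route produces $\frac{1}{d}\sum_i r(\Xik)$, which is an upper bound for $r(\hatXk)$ rather than the other way around, so it would not recover the $(1-\eta) r(\hatXk)$ coefficient in the target inequality. Splitting at the averaged level first and only then invoking Jensen on the $\hatXk + \hatSk$ piece is what makes the argument go through. Apart from this small observation, the proof is essentially a direct assembly of Lemma~\ref{le:dist-hatX}, convexity, and Lipschitz continuity of $r$; I do not anticipate any further obstacle.
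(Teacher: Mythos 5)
Your proposal is correct and follows essentially the same route as the paper's proof: the convex splitting $\hatXk + \eta\hatSk = (1-\eta)\hatXk + \eta(\hatXk+\hatSk)$, Jensen's inequality applied to the averaged point, and Lipschitz continuity of $r$ combined with Lemma~\ref{le:dist-hatX} to absorb the discrepancy $\norm{\hatXkn - \hatXk - \eta\hatSk}\ff$. Your remark about the ordering of the splitting is a fair observation but does not change the substance; no gaps.
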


\begin{proof}
	It follows from the convexity of $r$ and Jensen's inequality that
	\begin{equation*}
		r(\hatXk + \hatSk)
		\leq \dfrac{1}{d} \sumiid r(\Xik + \Sik),
	\end{equation*}
	and
	\begin{equation*}
		r(\hatXk + \eta \hatSk)
		= r(\eta (\hatXk + \hatSk) + (1 - \eta) \hatXk)
		\leq \eta r(\hatXk + \hatSk) + (1 - \eta) r(\hatXk).
	\end{equation*}
	Then, in light of the Lipschitz continuity of $r$, we have
	\begin{equation*}
	\begin{aligned}
		r (\hatXkn) - r (\hatXk + \eta \hatSk)
		\leq {} & L_r \norm{\hatXkn - \hatXk - \eta \hatSk}\ff \\
		\leq {} & \dfrac{L_r (8 M_{pj} + \sqrt{d} M_{tg})}{d} \norm{\bfXk - \bbXk}\fs 
		+ \dfrac{2 \eta^2 L_r M_{pj}}{d} \norm{\bfSk}\fs,
	\end{aligned}
	\end{equation*}
	where the last inequality follows from Lemma \ref{le:dist-hatX}.
	Combining the above three inequalities, we complete the proof.
\end{proof}

Then the following proposition establishes a descent inequality for the function $r$.

\begin{proposition} \label{prop:des-r}
	Let $\{(\bfXk, \bfDk, \bfSk)\}$ be the iterate sequence generated by Algorithm \ref{alg:DR-ProxGT} with 
	\begin{equation} \label{eq:cond-r}
		0 < \eta \leq \min\{1, 2\tau\}.
	\end{equation}
	Suppose that Assumption \ref{asp:objective} and Assumption \ref{asp:network} hold.
	Then, for any $k \in \bN$, it holds that
	\begin{equation*}
	\begin{aligned}
		r (\hatXkn)
		\leq {} &  r (\hatXk)
		- \eta \jkh{\hatDk, \hatSk} 
		+ \dfrac{1}{2 d L_r} \norm{\bfDk - \avDk}\fs \\
		& + \dfrac{C_{rx}}{d} \norm{\bfXk - \bbXk}\fs
		+ \dkh{\dfrac{\eta^2 C_{rs}}{d} - \dfrac{\eta (1 - 2 L_r \tau)}{2 d \tau}} \norm{\bfSk}\fs,
	\end{aligned}
	\end{equation*}
	where $C_{rx}$ and $C_{rs}$ are two positive constants defined by
	\begin{equation*}
		C_{rx} = L_r (9 M_{pj} + \sqrt{d} M_{tg} + M_{av} + 1) + M_g M_{tg} / 2 + 1,
	\end{equation*}
	and
	\begin{equation*}
		C_{rs} = 2 L_r M_{pj},
	\end{equation*}
	respectively.
\end{proposition}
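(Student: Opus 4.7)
The plan is to begin from Lemma \ref{le:convex-r}, which gives
\[
r(\hatXkn) \leq (1-\eta)r(\hatXk) + \frac{\eta}{d}\sumiid r(\Xik + \Sik) + \frac{L_r(8M_{pj} + \sqrt{d}M_{tg})}{d}\norm{\bfXk - \bbXk}\fs + \frac{2\eta^2 L_r M_{pj}}{d}\norm{\bfSk}\fs,
\]
and then attack $\frac{\eta}{d}\sumiid r(\Xik + \Sik)$ by exploiting the $1/\tau$-strong convexity of $g_i^{(k)}(S) := \jkh{\Dik, S} + \frac{1}{2\tau}\norm{S}\fs + r(\Xik + S)$ together with the optimality of $\Sik$ on $\cT_{\Xik}$, tested against the tangent direction $P_i^{(k)} := \proj_{\cT_{\Xik}}(\barXk - \Xik)$.

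First, from $g_i^{(k)}(P_i^{(k)}) \geq g_i^{(k)}(\Sik) + \frac{1}{2\tau}\norm{P_i^{(k)} - \Sik}\fs$ (the linear term vanishes because the tangent-space projection of $\partial g_i^{(k)}(\Sik)$ is zero and $P_i^{(k)} - \Sik \in \cT_{\Xik}$), rearranging yields
\[
r(\Xik + \Sik) \leq r(\Xik + P_i^{(k)}) + \jkh{\Dik, P_i^{(k)} - \Sik} + \tfrac{1}{\tau}\jkh{\Sik, P_i^{(k)}} - \tfrac{1}{\tau}\norm{\Sik}\fs.
\]
I would bound $r(\Xik + P_i^{(k)}) \leq r(\barXk) + L_r M_{pj}\norm{\Xik - \barXk}\fs$ by combining the Lipschitz continuity of $r$ with \eqref{eq:proj-Lip-s} applied at $X = \Xik$ and $V = \barXk - \Xik$. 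Averaging over $i$, multiplying by $\eta$, and using \eqref{eq:dist-aver} to write $\eta r(\barXk) \leq \eta r(\hatXk) + \frac{\eta L_r M_{av}}{d}\norm{\bfXk - \bbXk}\fs$ turns $(1-\eta)r(\hatXk) + \eta r(\barXk)$ into $r(\hatXk)$ modulo a controlled error.

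Next, the three cross terms are handled separately. I would apply Lemma \ref{le:hatD-proj} to $\frac{1}{d}\sumiid \jkh{\Dik, P_i^{(k)}}$; for $-\frac{\eta}{d}\sumiid \jkh{\Dik, \Sik}$ I decompose as $-\eta\jkh{\hatDk, \hatSk} - \frac{\eta}{d}\sumiid\jkh{\Dik - \hatDk, \Sik - \hatSk}$ using $\sumiid(\Dik - \hatDk) = 0$, then apply Young's inequality with parameter chosen so that, added to Lemma \ref{le:hatD-proj}'s contribution, the total coefficient of $\norm{\bfDk - \avDk}\fs$ equals exactly $\frac{1}{2dL_r}$; and for $\frac{\eta}{d\tau}\sumiid \jkh{\Sik, P_i^{(k)}}$ I use Young's with unit parameter to get $\frac{\eta}{2d\tau}\norm{\bfSk}\fs + \frac{\eta}{2d\tau}\norm{\bfXk - \bbXk}\fs$. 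Collecting $\norm{\bfXk - \bbXk}\fs$ coefficients produces precisely $L_r(8M_{pj} + \sqrt{d}M_{tg}) + L_r M_{pj} + L_r M_{av} + L_r + M_g M_{tg}/2 + 1 = C_{rx}$ after using $\eta \leq 1$, while the $\norm{\bfSk}\fs$ coefficient becomes $-\frac{\eta}{d\tau} + \frac{\eta}{2d\tau} + \frac{\eta^2 L_r}{d} + \frac{2\eta^2 L_r M_{pj}}{d}$, which is bounded by $\frac{\eta^2 C_{rs}}{d} - \frac{\eta(1 - 2L_r\tau)}{2d\tau}$ using $\eta^2 L_r \leq \eta L_r$.

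The main obstacle is the $\frac{1}{\tau}\jkh{\Sik, P_i^{(k)}}$ term produced by the strong-convexity step with $S' = P_i^{(k)}$: a naive Young's inequality creates a coefficient $\frac{\eta}{2d\tau}$ in front of $\norm{\bfXk - \bbXk}\fs$ that would blow up as $\tau \to 0$ and cannot be absorbed into any $\tau$-independent constant. It is precisely the stepsize restriction $\eta \leq 2\tau$ in condition \eqref{eq:cond-r} that rescues this step, converting $\frac{\eta}{2\tau}$ into the harmless constant $1$ that appears explicitly in $C_{rx}$; verifying that this absorption is the only place $\eqref{eq:cond-r}$ is used, and that the remaining terms balance to give the precise target coefficients, is the most delicate part of the bookkeeping.
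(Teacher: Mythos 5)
Your proposal is correct and follows essentially the same route as the paper: Lemma \ref{le:convex-r}, the strong convexity of $g_i^{(k)}$ tested against $\proj_{\cT_{\Xik}}(\barXk - \Xik)$, Lemma \ref{le:hatD-proj}, and the relations \eqref{eq:proj-Lip-s} and \eqref{eq:dist-aver}, with the condition $\eta \leq 2\tau$ used exactly where you identify it (absorbing $\eta/(2\tau)$ into the constant $1$ in $C_{rx}$). The only cosmetic difference is that you retain the term $\tfrac{1}{2\tau}\|P_i^{(k)}-\Sik\|\fs$ and expand it into a cross term before applying Young's inequality, whereas the paper drops it and keeps $\tfrac{1}{2\tau}\|P_i^{(k)}\|\fs - \tfrac{1}{2\tau}\|\Sik\|\fs$ directly; after your unit-parameter Young step the two bookkeepings coincide.
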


\begin{proof}
	To begin with, taking $S = \Sik$ and $S^{\prime} = \proj_{\cT_{\Xik}}(\barXk - \Xik)$ in \eqref{eq:gik} yields that
	\begin{equation*}
		g_i^{(k)} (\proj_{\cT_{\Xik}}(\barXk - \Xik)) - g_i^{(k)} (\Sik) 
		\geq \dfrac{1}{2 \tau}\norm{\proj_{\cT_{\Xik}}(\barXk - \Xik) - \Sik}\fs
		\geq 0,
	\end{equation*}
	which, after a suitable rearrangement, can be equivalently written as
	\begin{equation*}
	\begin{aligned}
		r (\Xik + \Sik) 
		\leq {} & r (\Xik + \proj_{\cT_{\Xik}}(\barXk - \Xik))
		- \jkh{\Dik, \Sik} 
		- \dfrac{1}{2 \tau} \norm{\Sik}\fs \\
		& + \jkh{\Dik, \proj_{\cT_{\Xik}}(\barXk - \Xik)} + \dfrac{1}{2 \tau} \norm{\proj_{\cT_{\Xik}}(\barXk - \Xik)}\fs.
	\end{aligned}
	\end{equation*}
	As a direct consequence of Lemma \ref{le:hatD-proj}, we can proceed to show that
	\begin{equation*}
	\begin{aligned}
		\dfrac{1}{d} \sumiid r (\Xik + \Sik)
		\leq {} & \dfrac{1}{d} \sumiid r(\Xik + \proj_{\cT_{\Xik}}(\barXk - \Xik))
		- \dfrac{1}{d} \sumiid \jkh{\Dik, \Sik} \\
		& - \dfrac{1}{2 d \tau} \norm{\bfSk}\fs
		+ \dfrac{1}{4 d L_r} \norm{\bfDk - \avDk}\fs \\
		& + \dkh{\dfrac{L_r + M_g M_{tg} / 2}{d} + \dfrac{1}{2 d \tau}} \norm{\bfXk - \bbXk}\fs.
	\end{aligned}
	\end{equation*}
	Moreover, the Lipschitz continuity of $r$ gives rise to that
	\begin{equation*}
	\begin{aligned}
		r(\Xik + \proj_{\cT_{\Xik}}(\barXk - \Xik))
		= {} & r(\Xik + \proj_{\cT_{\Xik}}(\barXk - \Xik)) - r (\barXk) \\
		& + r (\barXk) - r (\hatXk)
		+ r (\hatXk) \\
		\leq {} & L_r \norm{\barXk - \Xik - \proj_{\cT_{\Xik}}(\barXk - \Xik)}\ff \\
		& + L_r \norm{\barXk - \hatXk}\ff
		+ r (\hatXk) \\
		\leq {} & L_r M_{pj} \norm{\Xik - \barXk}\fs 
		+ \dfrac{L_r M_{av}}{d} \norm{\bfXk - \bbXk}\fs
		+ r (\hatXk),
	\end{aligned}
	\end{equation*}
	where the last inequality results from the relationships \eqref{eq:dist-aver} and \eqref{eq:proj-Lip-s}.
	Hence, we can attain that
	\begin{equation*}
	\begin{aligned}
		\dfrac{1}{d} \sumiid r (\Xik + \Sik)
		\leq {} &  r (\hatXk)
		+ \dkh{\dfrac{L_r (M_{pj} + M_{av} + 1) + M_g M_{tg} / 2}{d} + \dfrac{1}{2 d \tau}} \norm{\bfXk - \bbXk}\fs \\
		& + \dfrac{1}{4 d L_r} \norm{\bfDk - \avDk}\fs
		- \dfrac{1}{d} \sumiid \jkh{\Dik, \Sik}
		- \dfrac{1}{2 d \tau} \norm{\bfSk}\fs.
	\end{aligned}
	\end{equation*}
	In addition, it can be straightforwardly verified that
	\begin{equation*}
	\begin{aligned}
		\jkh{\hatDk, \hatSk}
		= {} & \dfrac{1}{d} \sumiid \jkh{\hatDk, \Sik}
		= \dfrac{1}{d} \sumiid \jkh{\hatDk - \Dik, \Sik}
		+ \dfrac{1}{d} \sumiid \jkh{\Dik, \Sik} \\
		\leq {} & \dfrac{1}{4 d L_r} \norm{\bfDk - \avDk}\fs 
		+ \dfrac{L_r}{d}  \norm{\bfSk}\fs 
		+ \dfrac{1}{d} \sumiid \jkh{\Dik, \Sik},
	\end{aligned}
	\end{equation*}
	which together with the condition \eqref{eq:cond-r} further results in that
	\begin{equation*}
	\begin{aligned}
		\dfrac{1}{d} \sumiid r (\Xik + \Sik)
		\leq {} &  r (\hatXk)
		+ \dfrac{L_r (M_{pj} + M_{av} + 1) + M_g M_{tg} / 2 + 1}{\eta d} \norm{\bfXk - \bbXk}\fs \\
		& + \dfrac{1}{2 \eta d L_r} \norm{\bfDk - \avDk}\fs
		- \jkh{\hatDk, \hatSk}
		+ \dkh{\dfrac{L_r}{d} - \dfrac{1}{2 d \tau}} \norm{\bfSk}\fs.
	\end{aligned}
	\end{equation*}
	The last thing to do in the proof is to combine the above relationship with Lemma \ref{le:convex-r}.
	Finally, we can obtain the assertion of this proposition.
\end{proof}

Now we are in the position to introduce the following quantity,
\begin{equation*}
	\hbar^{(k)} 
	:= f(\hatXk) + r(\hatXk) 
	+ \dfrac{\rho}{d} \norm{\bfXk - \bbXk}\fs
	+ \dfrac{\kappa}{d} \norm{\bfD - \avDk}\fs,
\end{equation*}
where $\rho > 0$ and $\kappa > 0$ are two constants defined by
\begin{equation*}
	\rho = \dfrac{2 (C_{fx} + C_{rx} + 12 \kappa L_f^2) (1 - 4 \sigma^{2t})}{1 - 8 \sigma^{2t}},
\end{equation*}
and
\begin{equation*}
	\kappa = \dfrac{1 - \sigma^{2t}}{L_r (1 - 2 \sigma^{2t})},
\end{equation*}
respectively.
We can prove that the sequence $\{\hbar^{(k)}\}$ satisfies a sufficient descent property.

\begin{corollary} \label{cor:des-h}
	Suppose that Assumption \ref{asp:objective} and Assumption \ref{asp:network} hold.
	Let $\{(\bfXk, \bfDk, \bfSk)\}$ be the iterate sequence generated by Algorithm \ref{alg:DR-ProxGT} with
	\begin{equation} \label{eq:cond-h}
	\left\{
	\begin{aligned}
		& 0 < \eta < \min\hkh{1, 2\tau, \dfrac{1 - 2 L_r \tau}{2 \tau C_s}}, \\
		& 0 < \tau <\min\hkh{\dfrac{1}{2 L_r}, \dfrac{\gamma}{24 (6\sqrt{d}M_g + L_r)}, \dfrac{1}{4 M_{pj} \sqrt{36 M_g^2 + L_r^2}}}, \\
		& t > \max\hkh{\left\lceil \log_{\sigma} \dkh{\dfrac{1}{2 \sqrt{2}}} \right\rceil, \left\lceil \log_{\sigma} \dkh{\dfrac{\gamma}{24 \sqrt{d} M_l}} \right\rceil},
	\end{aligned}
	\right.
	\end{equation}
	where $C_s = C_{fs} + C_{rs} + 8 \rho + 32 \kappa L_f^2$ is a positive constant.
 	Then for any $k \in \bN$, the following sufficient descent condition holds, which implies that the sequence $\{\hbar^{(k)}\}$ is monotonically non-increasing.
 	\begin{equation*}
 	\begin{aligned}
 		\hbar^{(k + 1)} 
 		\leq {} & \hbar^{(k)} 
 		- \dfrac{C_x (1 - 8 \sigma^{2t})}{d} \norm{\bfXk - \bbXk}\fs
 		- \dfrac{1 - 2 \sigma^{2t}}{2 d L_r} \norm{\bfDk - \avDk}\fs \\
 		& - \dkh{\dfrac{\eta (1 - 2 L_r \tau)}{2 d \tau} - \dfrac{\eta^2 C_s}{d}}\norm{\bfSk}\fs,
 	\end{aligned}
 	\end{equation*}
 	where $C_x = C_{fx} + C_{rx} + 12 \kappa L_f^2$ is a positive constant.
\end{corollary}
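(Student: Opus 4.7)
The plan is to combine the four ingredients already established: the two descent inequalities of Proposition \ref{prop:des-f} and Proposition \ref{prop:des-r}, together with the consensus recursion from Lemma \ref{le:error-cons} and the tracking recursion from Lemma \ref{le:error-trac}. The merit function $\hbar^{(k)}$ is designed precisely so that the positive contributions arising from the last three of these inequalities can be absorbed into strictly negative ones by choosing the weights $\rho$ and $\kappa$ in the correct order.

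First I would add the inequality of Proposition \ref{prop:des-f} to the inequality of Proposition \ref{prop:des-r}; the decisive point is that the cross terms $\pm\eta\langle \hat D^{(k)}, \hat S^{(k)}\rangle$ cancel, producing a clean bound on $f(\hat X^{(k+1)}) + r(\hat X^{(k+1)})$ in terms of $\|\bfXk - \bbXk\|\fs$, $\|\bfDk - \avDk\|\fs$, and $\|\bfSk\|\fs$ with coefficients $C_{fx}+C_{rx}$, $1/(2L_r)$, and $\eta^2(C_{fs}+C_{rs}) - \eta(1-2L_r\tau)/(2\tau)$ respectively. Next I would apply Lemma \ref{le:error-cons} multiplied by $\rho/d$ and Lemma \ref{le:error-trac} multiplied by $\kappa/d$ to control the corresponding terms in $\hbar^{(k+1)}$, producing an inequality of the form
\begin{equation*}
\hbar^{(k+1)} \leq \hbar^{(k)} + \tfrac{A}{d}\|\bfXk - \bbXk\|\fs + \tfrac{B}{d}\|\bfDk - \avDk\|\fs + \tfrac{C}{d}\|\bfSk\|\fs,
\end{equation*}
with $A = C_{fx} + C_{rx} + 12\kappa L_f^2 - \rho(1-8\sigma^{2t})$, $B = 1/(2L_r) - \kappa(1-2\sigma^{2t})$, and $C = \eta^2 C_s - \eta(1-2L_r\tau)/(2\tau)$ where $C_s = C_{fs}+C_{rs}+8\rho+32\kappa L_f^2$.

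The remaining work is then a direct verification that the prescribed $\rho$, $\kappa$ and the conditions in \eqref{eq:cond-h} drive each of $A$, $B$, $C$ to exactly the negative values stated. Substituting $\kappa = (1-\sigma^{2t})/(L_r(1-2\sigma^{2t}))$ yields $B = -(1-2\sigma^{2t})/(2L_r)$, which requires the bound $\sigma^{2t} < 1/2$ implied by $t > \lceil \log_\sigma(1/(2\sqrt{2}))\rceil$ so that $\kappa > 0$. Substituting $\rho = 2C_x(1-4\sigma^{2t})/(1-8\sigma^{2t})$ then gives $A = -C_x(1-8\sigma^{2t})$, which requires the sharper bound $\sigma^{2t} < 1/8$, again furnished by the same condition on $t$. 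Finally the stepsize restriction $\eta < (1-2L_r\tau)/(2\tau C_s)$ together with $\tau < 1/(2L_r)$ makes $C$ equal the stated negative quantity $\eta^2 C_s/d - \eta(1-2L_r\tau)/(2d\tau)$.

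The only genuine obstacle is bookkeeping: because $C_s$ itself depends on $\rho$ and $\kappa$, one has to choose $\kappa$ first, then $\rho$, then fix $C_s$, and only then impose the upper bound on $\eta$; the remaining $\tau$-bounds and $t$-bound are just the union of all conditions previously needed for Lemma \ref{le:cN}, Corollary \ref{cor:dist-hatXkn}, and Proposition \ref{prop:des-r} (namely \eqref{eq:cond-N}, \eqref{eq:cond-f}, \eqref{eq:cond-r}) together with the two sharper thresholds $2L_r\tau < 1$ and $\sigma^{2t} < 1/8$ introduced here. Once all coefficients are lined up, monotonic non-increase of $\{\hbar^{(k)}\}$ is immediate from the fact that each of the three error terms on the right-hand side appears with a non-positive coefficient.
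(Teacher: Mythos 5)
Your proposal is correct and follows exactly the route the paper takes: its proof of this corollary is a one-line instruction to collect Lemma \ref{le:error-cons}, Lemma \ref{le:error-trac}, Proposition \ref{prop:des-f}, and Proposition \ref{prop:des-r}, and your write-up simply carries out that bookkeeping, with the coefficient computations for $A$, $B$, $C$ and the role of $\rho$, $\kappa$, and the thresholds $\sigma^{2t} < 1/8$ and $\eta < (1-2L_r\tau)/(2\tau C_s)$ all checking out.
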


\begin{proof}
	Collecting Lemma \ref{le:error-cons}, Lemma \ref{le:error-trac}, Proposition \ref{prop:des-f}, Proposition \ref{prop:des-r} together, we can obtain the assertion of this corollary.
\end{proof}

\subsection{Global Convergence}

Based on the sufficient descent property of $\{\hbar^{(k)}\}$, we can finally establish the global convergence guarantee of Algorithm \ref{alg:DR-ProxGT} to a stationary point. 

\begin{theorem} \label{thm:rate}
	Let $\{(\bfXk, \bfDk, \bfSk)\}$ be the iterate sequence generated by Algorithm \ref{alg:DR-ProxGT} with the algorithmic parameters satisfying the condition \eqref{eq:cond-h}.
	Suppose that Assumption \ref{asp:objective} and Assumption \ref{asp:network} hold.
	Then $\{\bfXk\}$ has at least one accumulation point. 
	Moreover, for any accumulation point $\bfX^{\ast}$ of $\{\bfXk\}$, there exists a stationary point $X^{\ast} \in \cM$ of the problem \eqref{opt:stiefel} such that $\bfX^{\ast} = (\bfone_d \otimes I_n) X^{\ast}$. 
	Finally, the following relationships hold, which results in the global sublinear convergence rate of Algorithm \ref{alg:DR-ProxGT}.
	\begin{equation} \label{eq:sublinear}
	\left\{
	\begin{aligned}
		& \min_{k = 0, 1, \dotsc, K - 1} \norm{\bfXk - \bbXk}\fs
		\leq \dfrac{d (\hbar^{(0)} - \underline{h})}{C_x (1 - 8 \sigma^{2t}) K}, \\
		& \min_{k = 0, 1, \dotsc, K - 1} \norm{\bfDk - \avDk}\fs
		\leq \dfrac{2 d L_r (\hbar^{(0)} - \underline{h})}{(1 - 2 \sigma^{2t}) K}, \\
		& \min_{k = 0, 1, \dotsc, K - 1} \norm{\bfSk}\fs
		\leq \dfrac{2 d \tau (\hbar^{(0)} - \underline{h})}{\eta (1 - 2 \tau L_r - 2 \eta \tau C_s) K},
	\end{aligned}
	\right.
	\end{equation}
	where $\underline{h} := \inf\{f(X) + r(X) \mid X \in \conv(\cM)\}$ is a constant.
\end{theorem}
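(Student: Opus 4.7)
The plan is to derive the three rate bounds in \eqref{eq:sublinear} by telescoping the sufficient descent inequality from Corollary \ref{cor:des-h}, and then use these bounds to pass to the limit. First I would sum the descent inequality over $k = 0, 1, \dotsc, K-1$. Since $\hatXk \in \conv(\cM)$ (as a convex combination of points in $\cM$), the nonnegative terms $\rho \norm{\bfXk - \bbXk}\fs / d$ and $\kappa \norm{\bfDk - \avDk}\fs / d$ in the definition of $\hbar^{(k)}$ only help, so $\hbar^{(K)} \geq f(\hatX^{(K)}) + r(\hatX^{(K)}) \geq \underline{h}$. Telescoping then yields
\begin{equation*}
    \sum_{k = 0}^{K-1} \left( \dfrac{C_x (1 - 8 \sigma^{2t})}{d} \norm{\bfXk - \bbXk}\fs
    + \dfrac{1 - 2 \sigma^{2t}}{2 d L_r} \norm{\bfDk - \avDk}\fs
    + \dkh{\dfrac{\eta (1 - 2 L_r \tau)}{2 d \tau} - \dfrac{\eta^2 C_s}{d}} \norm{\bfSk}\fs \right)
    \leq \hbar^{(0)} - \underline{h}.
\end{equation*}
The condition \eqref{eq:cond-h} ensures the coefficient of $\norm{\bfSk}\fs$ is positive. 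Replacing each sum by $K$ times its minimum over $k$ and isolating each term yields the three bounds in \eqref{eq:sublinear}.

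Next, since $\cM$ is compact, the sequence $\{\bfXk\}$ lies in the compact set $\cM^d$ and thus admits an accumulation point by the Bolzano--Weierstrass theorem. Let $\bfX^{\ast}$ be such a point, with $\bfX^{(k_j)} \to \bfX^{\ast}$. The first bound in \eqref{eq:sublinear} implies $\liminf_{k \to \infty} \norm{\bfXk - \bbXk}\ff = 0$, so along a further subsequence $\norm{\bfX^{(k_j)} - \bbX^{(k_j)}}\ff \to 0$, which forces every local copy $X_i^{(k_j)}$ to converge to the same limit $X^{\ast} \in \cM$. Consequently $\bfX^{\ast} = (\bfone_d \otimes I_n) X^{\ast}$ and $\barX^{(k_j)} \to X^{\ast}$.

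For stationarity, I would invoke the first-order optimality condition of the convex subproblem \eqref{eq:update-S}: there exists $R_i^{(k)} \in \partial r(\Xik + \Sik)$ such that
\begin{equation*}
    \proj_{\cT_{\Xik}} \dkh{\Dik + \dfrac{1}{\tau} \Sik + R_i^{(k)}} = 0.
\end{equation*}
Along the subsequence, the remaining two bounds in \eqref{eq:sublinear} (passing to a further subsequence if necessary) ensure $\norm{\bfS^{(k_j)}}\ff \to 0$ and $\norm{\bfD^{(k_j)} - \avD^{(k_j)}}\ff \to 0$. Combined with Lemma \ref{le:hatD} and the continuity of $\nabla f$, this yields $D_i^{(k_j)} \to \nabla f(X^{\ast})$ for every $i$. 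Since $r$ is Lipschitz continuous, $\partial r$ is locally bounded, so $\{R_i^{(k_j)}\}$ has a convergent subsequence with limit $R^{\ast} \in \partial r(X^{\ast})$ by the outer semicontinuity of the Clarke subdifferential. Passing to the limit and exploiting continuity of the tangent-space projection in its base point (which follows from the proximal smoothness of $\cM$) gives
\begin{equation*}
    0 \in \proj_{\cT_{X^{\ast}}}\dkh{\nabla f(X^{\ast}) + R^{\ast}} \subseteq \proj_{\cT_{X^{\ast}}}\dkh{\nabla f(X^{\ast}) + \partial r(X^{\ast})},
\end{equation*}
which is the stationarity condition.

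The main obstacle I expect is this last limit-passing step, since $\proj_{\cT_{X}}(\cdot)$ depends on the base point $X$ and $\partial r$ is set-valued. The care needed is in selecting a common subsequence along which $\Xik + \Sik \to X^{\ast}$ for every $i$, so that outer semicontinuity of $\partial r$ applies simultaneously, and in showing joint continuity of $\proj_{\cT_{X}}(V)$ in $(X, V)$ on a neighborhood of $\cM$, which is justified by the proximal smoothness assumption and was already used implicitly via \eqref{eq:proj-Lip-s}.
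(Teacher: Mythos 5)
Your overall strategy---telescoping Corollary \ref{cor:des-h} to get the three rate bounds, compactness of $\cM$ for the existence of accumulation points, and passing to the limit in the first-order optimality condition of \eqref{eq:update-S}---is the same as the paper's. Your variant of the stationarity step, which works directly with an element $R_i^{(k)}\in\partial r(\Xik+\Sik)$ and the closedness of the convex subdifferential, rather than the paper's device of extracting a normal-space element $\Eik\in\cN_{\Xik}$ and invoking the closedness of $\partial h$ via \cite{Bolte2014proximal}, is a legitimate and arguably cleaner alternative since $r$ is convex; both routes ultimately rest on the same two ingredients (outer semicontinuity of the subdifferential and continuity of $\proj_{\cT_X}$ in the base point).

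There is, however, one step that fails as written. You deduce from the bounds in \eqref{eq:sublinear} only that $\liminf_{k\to\infty}\norm{\bfXk-\bar{\mathbf{X}}^{(k)}}\ff=0$, and then claim that the subsequence $\{k_j\}$ realizing the accumulation point $\bfX^{\ast}$ admits a \emph{further} subsequence along which $\norm{\bfX^{(k_j)}-\bar{\mathbf{X}}^{(k_j)}}\ff\to 0$ (and similarly for the tracking error and for $\norm{\bfS^{(k_j)}}\ff$). That inference is invalid: a vanishing $\liminf$ over the whole sequence says nothing about the behaviour along a prescribed subsequence (take $a_k=0$ for even $k$ and $a_k=1$ for odd $k$, with $\{k_j\}$ the odd indices). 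The repair is immediate and is exactly what the paper does: the same telescoping shows that the series $\sum_{k=0}^{\infty}\norm{\bfXk-\bar{\mathbf{X}}^{(k)}}\fs$ is finite (equivalently, $\{\hbar^{(k)}\}$ is monotone and bounded below by $\underline{h}$, hence convergent, so the successive differences vanish), which forces $\norm{\bfXk-\bar{\mathbf{X}}^{(k)}}\fs\to 0$, $\norm{\bfDk-\hat{\mathbf{D}}^{(k)}}\fs\to 0$, and $\norm{\bfSk}\fs\to 0$ along the \emph{entire} sequence. With full-sequence convergence in hand, your limit-passing argument applies to every accumulation point, as the statement requires.
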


\begin{remark}
	Since $f + r$ is continuous and $\conv(\cM)$ is compact, the constant $\underline{h}$ is well-defined.
\end{remark}

\begin{proof}
	To begin with, it is obvious that $\hatXk \in \conv(\cM)$.
	Then according to the definition of the constant $\underline{h}$, we know that
	\begin{equation*}
		\hbar^{(k)} \geq f(\hatXk) + r(\hatXk) \geq \underline{h}.
	\end{equation*}
	Hence, it follows from Corollary \ref{cor:des-h} that the sequence $\{\hbar^{(k)}\}$ is convergent and the following relationships hold.		
	\begin{equation} \label{eq:limit}
		\lim\limits_{k \to \infty} \norm{\bfXk - \bbXk}\fs = 0, 
		\quad
		\lim\limits_{k \to \infty} \norm{\bfDk - \avDk}\fs = 0, 
		\quad
		\lim\limits_{k \to \infty} \norm{\bfSk}\fs = 0.
	\end{equation}
	In addition, in light of Lemma \ref{le:bound-D} and Corollary \ref{cor:bound-S}, we know that the sequence $\{(\bfDk, \bfSk)\}$ is bounded.
	And owing to the compactness of $\cM$, the sequence $\{\bfXk\}$ is also bounded.
	Then from the Bolzano-Weierstrass theorem, it follows that the sequence $\{(\bfXk, \bfDk, \bfSk)\}$ exists an accumulation point, say $(\bfX^{\ast}, \bfD^{\ast}, \bfS^{\ast})$.
	The relationships in \eqref{eq:limit} infers that $\bfX^{\ast} = (\bfone_d \otimes I_n) \barX^{\ast}$ for some $\barX^{\ast} \in \cM$, $\bfD^{\ast} = (\bfone_d \otimes I_n) \nabla f(\barX^{\ast})$, and $\bfS^{\ast} = 0$.
	
	Now we fix an arbitrary $\iid$.
	By the optimality condition of the subproblem \eqref{eq:update-S}, we have
	\begin{equation*}
		0 \in \proj_{\cT_{\Xik}} \dkh{\Dik + \dfrac{1}{\tau} \Sik + \partial r(\Xik + \Sik)},
	\end{equation*}
	which further yields that
	\begin{equation*}
		\proj_{\cT_{\Xik}} \dkh{\nabla f(\Xik + \Sik) - \Dik} - \dfrac{1}{\tau} \Sik \in \proj_{\cT_{\Xik}} \dkh{\partial h(\Xik + \Sik)}.
	\end{equation*}
	Consequently, there exists $\Eik \in \cN_{\Xik}$ such that
	\begin{equation*}
		\proj_{\cT_{\Xik}} \dkh{\nabla f(\Xik + \Sik) - \Dik} - \dfrac{1}{\tau} \Sik + \Eik \in \partial h(\Xik + \Sik).
	\end{equation*}
	Let $\{(\bfX^{(k_l)}, \bfD^{(k_l)}, \bfS^{(k_l)})\}$ be the subsequence of $\{(\bfXk, \bfDk, \bfSk)\}$ converging to $(\bfX^{\ast}, \bfD^{\ast}, \bfS^{\ast})$.
	Then we can obtain that
	\begin{equation*}
		\proj_{\cT_{X_i^{(k_l)}}} \dkh{\nabla f(X_i^{(k_l)} + S_i^{(k_l)}) - D_i^{(k_l)}} - \dfrac{1}{\tau} S_i^{(k_l)} + E_i^{(k_l)} \in \partial h(X_i^{(k_l)} + S_i^{(k_l)}).
	\end{equation*}
	Without loss of generality, we can assume that $X_i^{(k_l)} + S_i^{(k_l)} \in \cR(\beta)$ for any $l \in \bN$, where $\beta > 0$ is a constant.
	According to Proposition 2.1.2 in \cite{Clarke1990}, it follows that the set $\{H \mid H \in \partial h(X), X \in \cR(\beta)\}$ is bounded, which results in the boundedness of $E_i^{(k_l)}$.
	Without loss of generality, we can assume that the sequence $\{E_i^{(k_l)}\}$ is convergent. 
	Let $E_i^{\ast}$ denote its limiting point.
	By virtue of the relationships in \eqref{eq:limit}, we can attain that
	\begin{equation*}
		\proj_{\cT_{X_i^{(k_l)}}} \dkh{\nabla f(X_i^{(k_l)} + S_i^{(k_l)}) - D_i^{(k_l)}} - \dfrac{1}{\tau} S_i^{(k_l)} + E_i^{(k_l)} \to E_i^{\ast},
	\end{equation*}
	and
	\begin{equation*}
		X_i^{(k_l)} + S_i^{(k_l)} \to \barX^{\ast},
	\end{equation*}
	as $l \to \infty$.
	Furthermore, since $h$ is Lipschitz continuous, we also have $h(X_i^{(k_l)} + S_i^{(k_l)}) \to h(\barX^{\ast})$ as $l \to \infty$.
	Then by Remark 1(ii) in \cite{Bolte2014proximal}, it holds that
	\begin{equation*}
		E_i^{\ast} \in \partial h(\barX^{\ast}).
	\end{equation*}
	It follows from the smoothness of the projection operator $\proj_{\cN_{X}}$ with respect to $X$ that
	\begin{equation*}
		E_i^{(k_l)} = \proj_{\cN_{X_i^{(k_l)}}} \dkh{E_i^{(k_l)}} \to \proj_{\cN_{\barX^{\ast}}} \dkh{E_i^{\ast}},
	\end{equation*}
	as $l \to \infty$.
	Hence, we have $E_i^{\ast} = \proj_{\cN_{\barX^{\ast}}} \dkh{E_i^{\ast}}$ and
	\begin{equation*}
		0 = \proj_{\cT_{\barX^{\ast}}} \dkh{E_i^{\ast}} \in \proj_{\cT_{\barX^{\ast}}} \dkh{\partial h(\barX^{\ast})},
	\end{equation*}
	which indicates that $\barX^{\ast}$ is a stationary point of the problem \eqref{opt:stiefel}.
	
	Finally, we prove that the relationships in \eqref{eq:sublinear} hold.
	Indeed, it follows from Corollary \ref{cor:des-h} that
	\begin{equation*}
		\sum_{k = 0}^{K - 1} \norm{\bfXk - \bbXk}\fs 
		\leq \dfrac{d}{C_x (1 - 8 \sigma^{2t})} 
		\sum_{k = 0}^{K - 1} \dkh{\hbar^{(k)} - \hbar^{(k + 1)}}
		\leq \dfrac{d (\hbar^{(0)} - \hbar^{(K)})}{C_x (1 - 8 \sigma^{2t})}
		\leq \dfrac{d (\hbar^{(0)} - \underline{h})}{C_x (1 - 8 \sigma^{2t})},
	\end{equation*}
	which implies the first relationship in \eqref{eq:sublinear}.
	The other two relationships can be proved similarly.
	Therefore, we complete the proof.
\end{proof}

The proof previously discussed leads to the insight that the three quantities in \eqref{eq:sublinear} can be used as stationarity measures.
This observation motivates us to define the concept of $\epsilon$-stationarity for the problem \eqref{opt:stiefel}.
We say $\bfXk$ is an $\epsilon$-stationary point of \eqref{opt:stiefel} if the following condition holds:
\begin{equation*}
	\min\hkh{\norm{\bfXk - \bbXk}\ff, \norm{\bfDk - \avDk}\ff, \norm{\bfSk}\ff} \leq \epsilon.
\end{equation*}
The sublinear convergence rate established in Theorem \ref{thm:rate} indicates that the iteration complexity of Algorithm \ref{alg:DR-ProxGT} is $\cO(\epsilon^{-2})$.

\section{Numerical Experiments} \label{sec:experiments}

In this section, we conduct a comparative evaluation of the numerical performance for DR-ProxGT against state-of-the-art solvers, specifically focusing on coordinate-independent sparse estimation (CISE) \cite{Chen2010coordinate} and sparse PCA \cite{Wang2023communication} problems.
The corresponding experiments are performed on a workstation with dual Intel Xeon Gold 6242R CPU processors (at $3.10$ GHz$\times 20 \times 2$) and $510$ GB of RAM under Ubuntu 20.04.
For a fair and consistent comparison, all the algorithms under test are implemented in the $\mathtt{Python}$ language with the communication realized by the $\mathtt{mpi4py}$ package.

\subsection{Comparison on CISE Problems} \label{subsec:cise}

We first engage in a numerical comparison between DR-ProxGT and the other two benchmark methods, DRSM \cite{Wang2023decentralized} and THANOS \cite{Wang2023smoothing}, on the CISE problem \cite{Chen2010coordinate}.
The CISE model is adept at achieving sparse sufficient dimension reduction while efficiently screening out irrelevant and redundant variables, which is accomplished by solving the following optimization problem,
\begin{equation} \label{opt:cise}
	\min\limits_{X \in \Snp} \hspace{2mm} 
	-\frac{1}{2} \sumiid \tr \dkh{ X\zz A_i A_i\zz X} + \mu \norm{X}_{2, 1}.
\end{equation}
Here, $A_i \in \bR^{n \times m_i}$ represents the local data matrix privately owned by agent $i$, consisting of $m_i$ samples with $n$ features.
This model incorporates an $\ell_{2, 1}$-norm regularizer, defined as $\norm{X}_{2, 1} := \sum_{i = 1}^n \sqrt{\sum_{j = 1}^p X(i, j)^2}$, to shrink the corresponding row vectors of irrelevant variables to zero.
The parameter $\mu > 0$ modulates the row sparsity level within the model for variable selection.
For convenience, we denote $A = [A_1 \; A_2 \; \dotsb \; A_d] \in \Rnm$ as the global data matrix, where $m = m_1 + m_2 + \dotsb + m_d$.

We set the algorithmic parameters $\tau = 10$, $\eta = 1$, and $t = 1$ in DR-ProxGT.
Moreover, THANOS is equipped with the BB stepsizes proposed in \cite{Wang2022decentralized}.
For each iteration $k$, the stepsize for DRSM is set to be $0.5 / \sqrt{k}$.
In our numerical experiments, all the algorithms are started from the same initial points. 
Given the nonconvex nature of the optimization problem, different solvers may still occasionally return different solutions when starting from a common initial point at random. 
As suggested in \cite{Huang2022riemannian}, to increase the chance that all solvers find the same solution, we construct the initial point from the leading $p$ left singular vectors of $A$, which can be computed efficiently by DESTINY \cite{Wang2022decentralized} under the decentralized setting.

Our investigation encompasses a widely recognized image dataset MNIST\footnote{Available from \url{http://yann.lecun.com/exdb/mnist/}.} in the realm of machine learning research, which contains $m = 60000$ samples with $n = 784$ features.
For our testing, the samples of MNIST are uniformly distributed into $d = 16$ agents.
The CISE problem \eqref{opt:cise} is tested with $p = 10$ and $\mu = 0.001$.
The corresponding numerical experiments are performed across three different networks associated with Metropolis constant edge weight matrices \cite{Shi2015}, including the ring network, grid network, and Erd{\"o}s-R{\'e}nyi network.

For our simulation in this case, we collect and record the following two quantities at each iteration as performance metrics.
\begin{itemize}
	
	\item Stationarity violation: $\dfrac{1}{d} \norm{\bfSk}\ff$.
	
	\item Consensus error: $\dfrac{1}{d} \norm{\bfXk - \bbXk}\ff$.
	
\end{itemize}
The numerical results of this test are illustrated in Figure \ref{fig:SPCAL21}.
Each subfigure, representing a certain structure of networks, depicts the diminishing trend of stationarity violations (top panel) and consensus errors (bottom panel) against the communication rounds on a logarithmic scale.
We observe that DR-ProxGT exhibits a superior performance over DRSM and THANOS across all three networks structures.
It is worth mentioning that DR-ProxGT appears to converge asymptotically at a linear rate.
These observations underscore the effectiveness of DR-ProxGT in achieving both stationarity and consensus rapidly, highlighting its advantage in handling the CISE problem.

\begin{figure}[ht!]
	\centering
	
	\subfigure[ring network]{
		\label{subfig:SPCAL21_kkt_Ring}
		\includegraphics[width=0.3\linewidth]{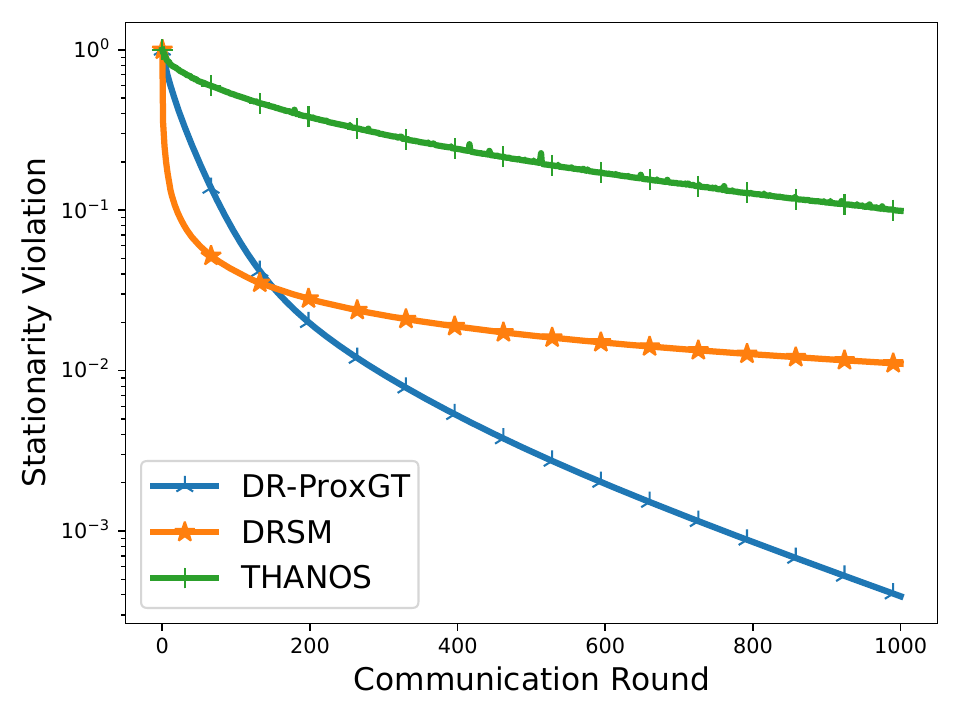}
	}
	\subfigure[grid network]{
		\label{subfig:SPCAL21_kkt_Grid}
		\includegraphics[width=0.3\linewidth]{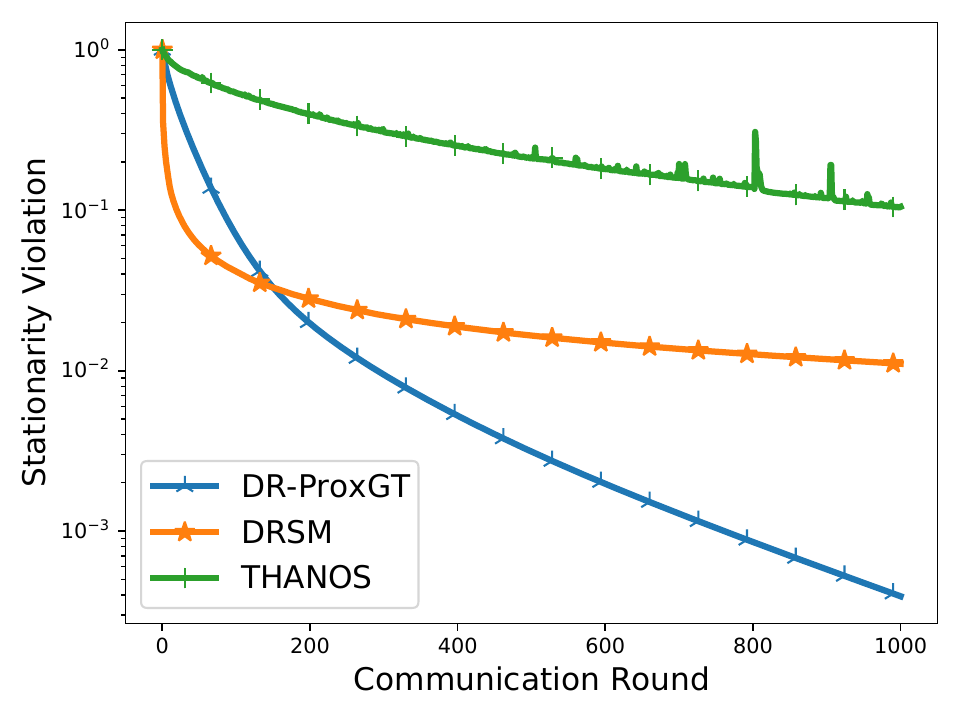}
	}
	\subfigure[Erd{\"o}s-R{\'e}nyi network]{
		\label{subfig:SPCAL21_kkt_ER}
		\includegraphics[width=0.3\linewidth]{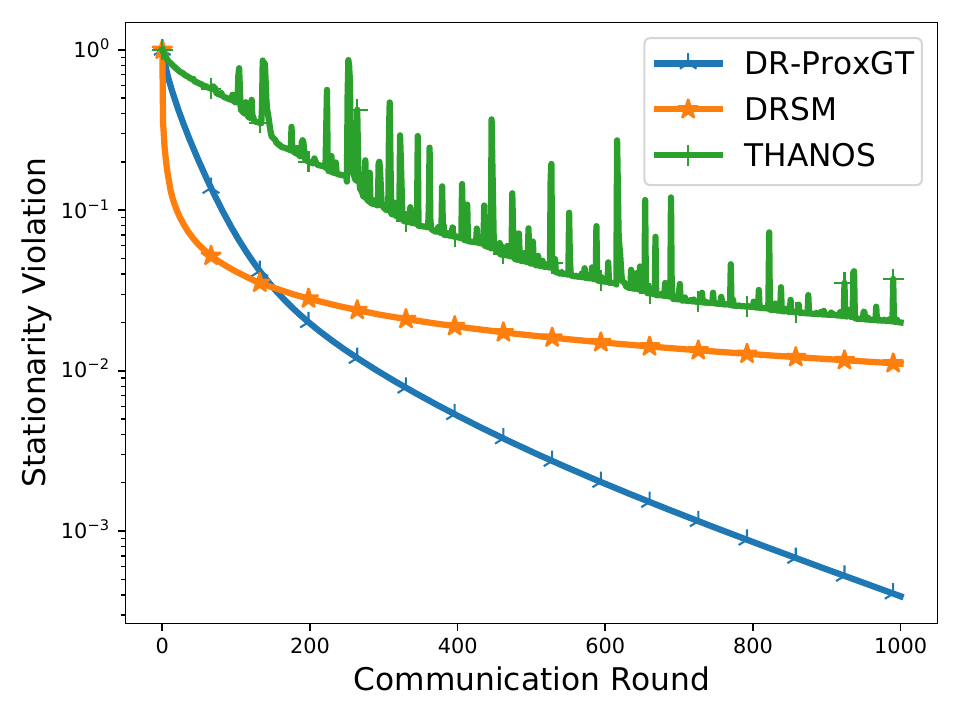}
	}
	
	\subfigure[ring network]{
		\label{subfig:SPCAL21_cons_Ring}
		\includegraphics[width=0.3\linewidth]{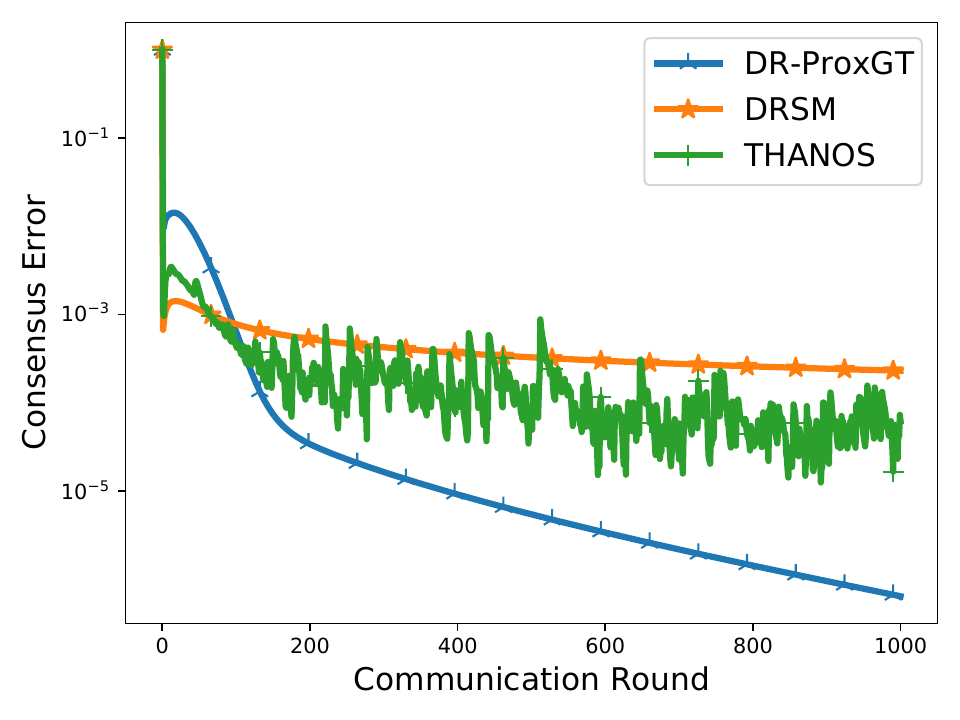}
	}
	\subfigure[grid network]{
		\label{subfig:SPCAL21_cons_Grid}
		\includegraphics[width=0.3\linewidth]{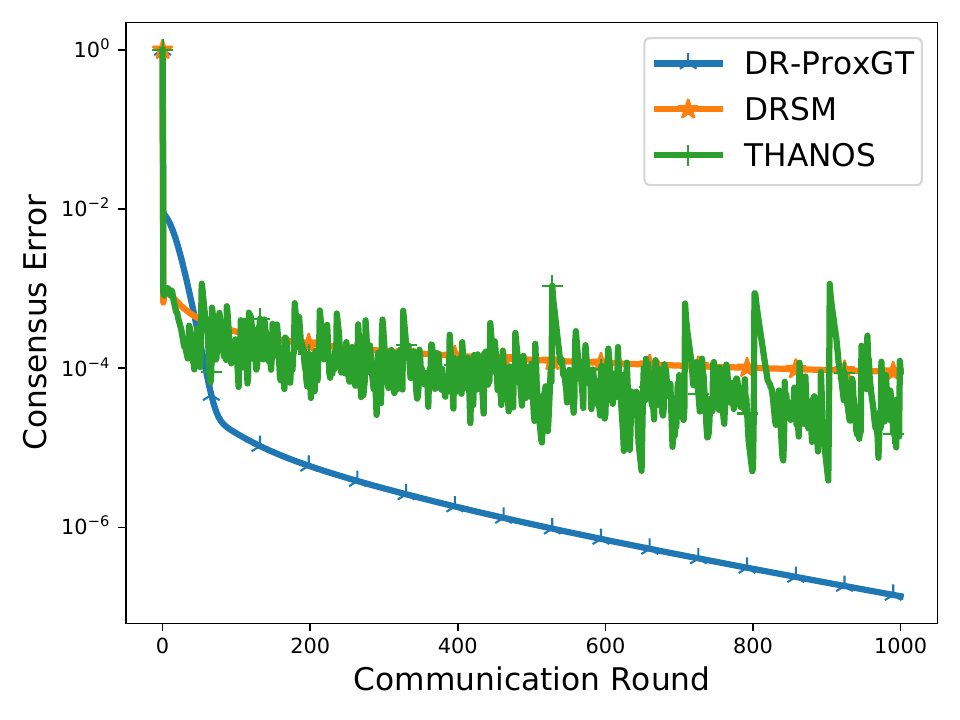}
	}
	\subfigure[Erd{\"o}s-R{\'e}nyi network]{
		\label{subfig:SPCAL21_cons_ER}
		\includegraphics[width=0.3\linewidth]{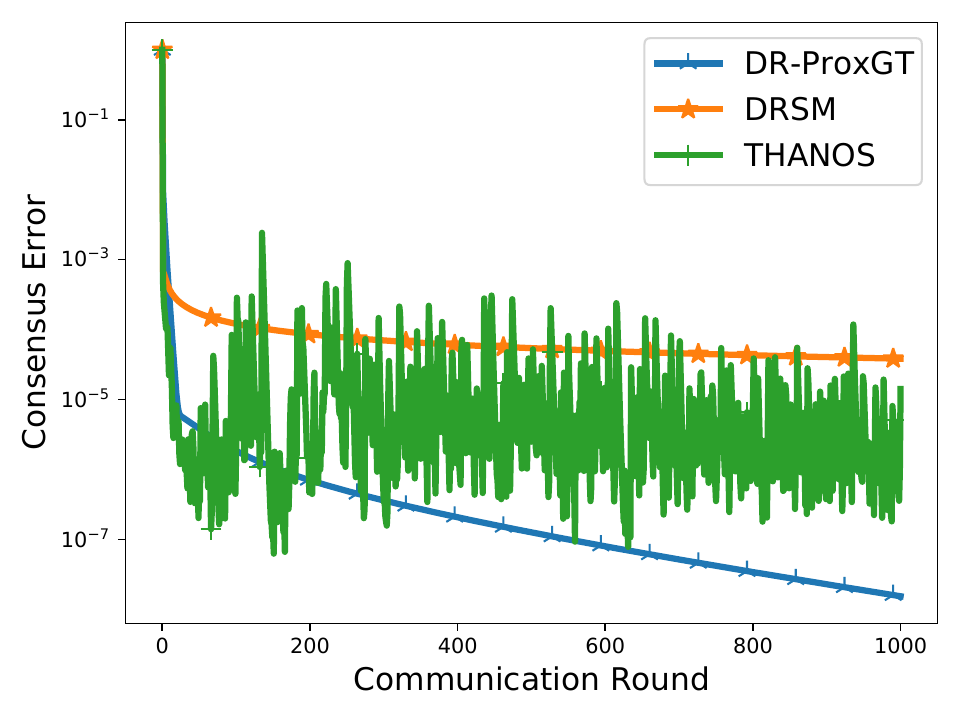}
	}
	
	\caption{Numerical comparison of DR-ProxGT, DRSM, and THANOS on different structures of networks.
		The subfigures in the first and second rows depict stationarity violations and consensus errors, respectively.}
	\label{fig:SPCAL21}
\end{figure}

\subsection{Comparison on Sparse PCA Problems} \label{subsec:spca}

The next experiment is to evaluate the numerical performance of DR-ProxGT, DRSM, and THANOS on the Global Health Estimates, which are available online\footnote{Available from \url{https://www.who.int/data/gho/data/themes/mortality-and-global-health-estimates}.}. 
The data were compiled from various sources, including national vital statistics, WHO technical programs, UN partners, the Global Burden of Disease, and scientific research. 
Our illustrative dataset contains mortality rates across $n = 19$ different age groups for $t = 20$ years (2000--2019) from $d = 50$ European countries, which is structured into a matrix $B \in \bR^{n \times dt}$.
This situation naturally lends itself to decentralized computations, as it involves data that is privately held within each country.
Such an approach is crucial for preserving the privacy of sensitive health information.

A key aspect of this study involves forecasting future mortality rates based on the Lee--Carter model \cite{Lee1992modeling,Basellini2023thirty}, where extracting the principal components of matrix $A$ is a crucial step.
In this context, we focus on solving the sparse PCA problem \cite{Wang2023communication} with the mortality dataset, aiming to enhance the interpretability and remediate the inconsistency issue.
The sparse PCA problem can be formulated as the following optimization model,
\begin{equation} \label{opt:spca}
	\min\limits_{X \in \Snp} \hspace{2mm} 
	-\frac{1}{2} \sumiid \tr \dkh{ X\zz B_i B_i\zz X} + \lambda \norm{X}_1.
\end{equation}
Here, $B_i \in \bR^{n \times t}$ contains the mortality rates collected from the $i$-th country.
The $\ell_1$-norm regularizer $\norm{X}_1 := \sum_{i = 1}^n \sum_{j = 1}^p \abs{X(i, j)}$ is imposed to promote sparsity in $X$.
The parameter $\lambda > 0$ is used to control the amount of sparseness.

In the subsequent experiment, we adopt the same stepsize strategies that are detailed in Subsection \ref{subsec:cise} for the tested algorithms.
The initial guesses are also constructed by singular value vectors.
We fix the number of principal components to be extracted at $p = 5$ and vary the parameter $\lambda$ among the values $\{0.05, 0.1, 0.15\}$. 
The three algorithms under test are configured to perform $600$ rounds of communication on an Erd{\"o}s-R{\'e}nyi network.
Numerical results from this experiment are presented in Table \ref{tb:who} with function values, sparsity levels, and consensus errors recorded.
When determining the sparsity level of a solution matrix (i.e., the percentage of zero elements), we consider a matrix element to be zero if its absolute value is less than 1e-5.
These results clearly demonstrate that DR-ProxGT outperforms DRSM and THANOS across all performance metrics by substantial margins. 
This finding is particularly noteworthy as it indicates that the superior performance of DR-ProxGT is not confined to simulated cases, but also extends to real-world applications.

\begin{table}[ht]
\caption{Numerical comparison of DR-ProxGT, DRSM, and THANOS for different values of $\lambda$.}
\label{tb:who}
\begin{tabular*}{\textwidth}{@{\extracolsep{\fill}}ccccc@{\extracolsep{\fill}}}
	\toprule%
	& Algorithm & Function Value & Sparsity Level & Consensus Error~~ \\
	\midrule
	\multirow{3}{*}{~~$\lambda = 0.05$} &
	DR-ProxGT & -7.76 & 26.32\% & 2.26e-04 \\
	& DRSM & -7.52 & 21.05\% & 2.35e-04 \\
	& THANOS  & -7.65 & 24.21\% & 4.96e-04 \\
	\midrule
	\multirow{3}{*}{~~$\lambda = 0.1$} &
	DR-ProxGT & -7.15 & 50.53\% & 3.41e-05 \\
	& DRSM & -6.98 & 30.53\% & 2.68e-04 \\
	& THANOS  & -7.02 & 31.58\% & 9.26e-05 \\
	\midrule
	\multirow{3}{*}{~~$\lambda = 0.15$} &
	DR-ProxGT & -6.54 & 64.21\% & 4.12e-05 \\
	& DRSM & -6.32 & 32.63\% & 2.51e-04 \\
	& THANOS  & -6.47 & 34.74\% & 8.25e-05 \\
	\bottomrule
\end{tabular*}
\end{table}

\section{Concluding Remarks} \label{sec:conclusions}

The composite optimization problems on Riemannian manifolds, albeit frequently encountered in many applications, present considerable challenges when addressed under the decentralized setting, primarily due to their intrinsic nonsmoothness and nonconvexity.
Current algorithms typically resort to subgradient methods or construct a smooth approximation of the objective function, which often fall short in terms of communication-efficiency.
To address this issue, we propose a novel algorithm DR-ProxGT that incorporates the gradient tracking technique into the framework of Riemannian proximal gradient methods.
We have rigorously demonstrated that DR-ProxGT achieves global convergence to a stationary point of \eqref{opt:stiefel} under mild conditions.
Furthermore, we have established the iteration complexity of $\cO(\epsilon^{-2})$, a notable improvement over existing results in the literature.
The numerical experiments illustrate that DR-ProxGT significantly surpasses existing algorithms, as evidenced by tests on CISE and sparse PCA problems.

It is worth noting that the global convergence and iteration complexity discussed in Section \ref{sec:convergence} are analyzed under the scenario of multiple consensus steps. 
Yet, from our numerical experiments, it is evident that DR-ProxGT exhibits impressive performance even with just a single consensus step per iteration. 
This observation raises a compelling question: 
\begin{center}
	{\it Can the convergence guarantee be maintained when limited to a single consensus step?}
\end{center}
This remains an intriguing area for further investigation, potentially offering more streamlined and efficient implementation possibilities for the algorithm.

\paragraph{Acknowledgement}

The work of Lei Wang and Le Bao was supported in part by the Joint United Nations Programme on HIV/AIDS and NIH/NIAID (R01AI136664 and R01AI170249).
The work of Xin Liu was supported in part by the National Natural Science Foundation of China (12125108, 12226008, 12021001, 12288201, 11991021), Key Research Program of Frontier Sciences, Chinese Academy of Sciences (ZDBS-LY-7022), and CAS AMSS-PolyU Joint Laboratory of Applied Mathematics.


\bibliographystyle{siam}

\bibliography{../../Reference/library}

\addcontentsline{toc}{section}{References}

\end{document}